\numberwithin{equation}{section}
\newtheorem{theorem}{Theorem}[section]
\theoremstyle{remark}
\newtheorem{remark}[theorem]{Remark}
\theoremstyle{plain}
\newtheorem{lemma}[theorem]{Lemma}
\theoremstyle{plain}
\theoremstyle{definition}
\newtheorem{definition}[theorem]{Definition}
\title{A Multiplication Formula for the Modified Caldero-Chapoton Map}
\begin{document}
\keywords{Auslander-Reiten triangle, categorification, cluster algebra, cluster category, cluster tilting object, cluster tilting subcategory, rigid object, rigid subcategory, triangulated category}
\subjclass[2010]{05E10, 13F60, 16G70, 18E30}
\author{David Pescod}
\address{School of Mathematics and Statistics, Newcastle University, Newcastle Upon Tyne NE1 7RU, United Kingdom}
\email{d.pescod1@ncl.ac.uk}

\begin{abstract}
A frieze in the modern sense is a map from the set of objects of a triangulated category $\mathsf{C}$ to some ring. A frieze $X$ is characterised by the property that if $\tau x\rightarrow y\rightarrow x$ is an Auslander-Reiten triangle in $\mathsf{C}$, then $X(\tau x)X(x)-X(y)=1$. The canonical example of a frieze is the (original) Caldero-Chapoton map, which send objects of cluster categories to elements of cluster algebras.
\par
In \cite{friezes1} and \cite{friezes2}, the notion of generalised friezes is introduced. A generalised frieze $X'$ has the more general property that $X'(\tau x)X'(x)-X'(y)\in\{0,1\}$. The canonical example of a generalised frieze is the modified Caldero-Chapoton map, also introduced in \cite{friezes1} and \cite{friezes2}.
\par
Here, we develop and add to the results in \cite{friezes2}. We define Condition F for two maps $\alpha$ and $\beta$ in the modified Calero-Chapoton map, and in the case when $\mathsf{C}$ is 2-Calabi-Yau, we show that it is sufficient to replace a more technical ``frieze-like" condition from \cite{friezes2}. We also prove a multiplication formula for the modified Caldero-Chapoton map, which significantly simplifies its computation in practice.
\end{abstract}

\maketitle
\section{Introduction}\label{introduction}
\subsection{Summary}
This paper focuses around two main topics: generalised friezes with integer values (see \cite{friezes1}) and generalised friezes taking values inside a Laurent polynomial ring (see \cite{friezes2}).
\par
A frieze is a map $X:\mathrm{obj}\,\mathsf{C}\rightarrow A$, where $\mathsf{C}$ is some triangulated category with Auslander-Reiten (AR) triangles and $A$ is a ring, such that the following exponential conditions are satisfied:
\begin{equation}\label{eqn:friezeexponential}
X(0)=1~\mathrm{and}~X(a\oplus b)=X(a)X(b),
\end{equation}
and if $\tau x\rightarrow y\rightarrow x$ is an AR triangle, then 
\begin{equation}\label{intro:def:frieze}
X(\tau x)X(x)-X(y)=1.
\end{equation}
The canonical example of a frieze is the Caldero-Chapoton map, which we recall in Section \ref{sec:theccmap}.
\par
Generalised friezes are similarly defined maps $X':\mathrm{obj}\,\mathsf{C}\rightarrow A$, also satisfying the exponential conditions in (\ref{eqn:friezeexponential}), however we permit the more general property that
\begin{equation}\label{intro:def:genfrieze}
X'(\tau x)X'(x)-X'(y)\in\{0,1\}.
\end{equation}
The canonical example of a generalised frieze is the modified Caldero-Chapoton map, which we recall in Section \ref{sec:modccmap}. The arithmetic version $\pi$, with integer values, is defined in Equation (\ref{intro:def:modccmaparithmetic}), whilst the more general version $\rho$, taking values inside a Laurent polynomial ring, is defined in Equation (\ref{eqn:introccmap}).
\par
The modified Caldero-Chapoton map was introduced in \cite{friezes2}, and we improve and add to the results of that paper. When working with a 2-Calabi-Yau category, we manage to replace the technical ``frieze-like" condition (see \cite[def.~1.4]{friezes2}) for the maps $\alpha$ and $\beta$ in the generalised Caldero-Chapoton map (Equation (\ref{eqn:introccmap})),  by our so-called Condition F (see Definition \ref{def:conditionf}). This condition significantly simplifies the frieze-like condition and demonstrates the roles of $\alpha$ and $\beta$. We will see that $\alpha$ plays the role of a ``generalised index", whilst $\beta$ provides a correction term to $\alpha$ being exponential over a distinguised triangle.
\par
We use this to establish a multiplication formula for the modified Caldero-Chapoton map $\rho$ (see Theorem \ref{thm:extensionadaption}), allowing its computation in practice. In \cite{friezes2}, the computation of $\rho$ is not addressed. However, our multiplication formula does address the computation, and does so in a simpler manner than merely applying the definition. In particular, the formula allows us to compute values of $\rho$ without calculating Euler characteristics of submodule Grassmannians which are otherwise part of the definition of $\rho$.

\subsection{Cluster Categories}
Cluster categories were first introduced in \cite{clustercombinatorics} by Buan-Marsh-Reineke-Reiten-Todorov as a means of understanding the `decorated quiver representations' introduced by Reineke-Marsh-Zelevinksy in \cite{mrz}. Let $Q$ be a finite quiver with no loops or cycles, and consider the category $\mathsf{mod}\,\mathbb{C}Q$ of finitely generated modules over the path algebra $\mathbb{C}Q$. Then, set
\[
\mathsf{D}(Q)=\mathsf{D}^b(\mathsf{mod}\,\mathbb{C}Q),
\]
 the bounded derived category of $\mathsf{mod}\,\mathbb{C}Q$.
\par
The cluster category of type $Q$, denoted $\mathsf{C}(Q)$, is defined to be the orbit category of $\mathsf{D}(Q)$ under the action of the cyclic group generated by the autoequivalence $\tau^{-1}\Sigma=S^{-1}\Sigma^2$, where $\tau$ is the Auslander-Reiten translation, $\Sigma$ the suspension functor and $S$ the Serre functor. That is, 
\[
\mathsf{C}(Q)=\mathsf{D}(Q)/(S^{-1}\Sigma^2).
\]
The objects in $\mathsf{C}(Q)$ are the same as those in $\mathsf{D}(Q)$, however, the morphism sets in $\mathsf{C}(Q)$ are given by
\[
\textnormal{Hom}_{\mathsf{C}(Q)}(X,Y)=\bigoplus_{n\in\mathbb{Z}} \textnormal{Hom}_{\mathsf{D}(Q)}(X,(S^{-1}\Sigma^2)^nY).
\]
We note that $\mathsf{C}(Q)$ possesses a triangulated structure, it is $\mathbb{C}$-linear, Hom-finite, Krull-Schmidt and 2-Calabi-Yau, meaning that its Serre functor is $\Sigma^2$. It is also essentially small and has split idempotents. See \cite[sec.~1]{clustercombinatorics}
\par
The cluster category of Dynkin type $A_n$, denoted by $\mathsf{C}(A_n)$, has a very nice polygon model associated to it. This is due to Caldero, Chapoton and Schiffler in \cite{clusterancase}, who defined, for finite quivers of type $A_n$, an equivalent category to the cluster category in \cite{clustercombinatorics} in a totally different manner. This is done using a triangulation of a regular $(n+3)$-gon $P$, with objects and morphisms described in \cite[sec.~2]{clusterancase}.
\par
 The category $\mathsf{C}(A_n)$ carries several nice properties.
 There is a bijection between the set of indecomposables of $\mathsf{C}(A_n)$, denoted indec$\,\mathsf{C}(A_n)$, and the set of diagonals of $P$. We also identify each edge of $P$ with the zero object in $\mathsf{C}(A_n)$. Applying the suspension functor $\Sigma$ to an indecomposable corresponds to rotating the endpoints of the associated diagonal one vertex clockwise. That is, if the vertices of $P$ are labelled in an anticlockwise fashion with the set $\{0,1,\dots,n+2\}$, then for some indecomposable $\{i,j\}$, where $i$ and $j$ are vertices of $P$, we have
\[
\Sigma\{i,j\}=\{i-1,j-1\}.
\]
Such coordinates should clearly be taken modulo $n+3$.
\par
Identifying indecomposables of $\mathsf{C}(A_n)$ with the diagonals of $P$ carries the convenient property that for $a,b\in\,$indec$\,\mathsf{C}(A_n)$,
\[
\textnormal{dim}_\mathbb{C}\textnormal{Ext}_{\mathsf{C}(A_n)}^1(a,b)=
\left\{
\begin{array}{ll}
1, & \textnormal{if~$a$~and~$b$~cross}\\
0, & \mathrm{if}~a~\textnormal{and~$b$~do not~cross}.
\end{array}
\right.
\]

\subsection{The Auslander-Reiten Quiver}
The Auslander-Reiten quiver for $\mathsf{C}(A_n)$ is $\mathbb{Z}A_n$ modulo a glide reflection. A coordinate system may be put on the quiver, matching up with the diagonals of the $(n+3)$-gon (see Figure \ref{fig:arquiverca5} for an example of $\mathsf{C}(A_5)$).

\begin{figure}
\centering
\begin{tikzpicture}[scale=1, every node/.style={font=\footnotesize}]
\node at (0,0) {\{1,3\}};
\node at (2,0) {\{2,4\}};
\node at (4,0) {\{3,5\}};
\node at (6,0) {\{4,6\}};
\node at (8,0) {\{5,7\}};

\node at (1,1) {\{1,4\}};
\node at (3,1) {\{2,5\}};
\node at (5,1) {\{3,6\}};
\node at (7,1) {\{4,7\}};

\node at (0,2) {\{4,8\}};
\node at (2,2) {\{1,5\}};
\node at (4,2) {\{2,6\}};
\node at (6,2) {\{3,7\}};
\node at (8,2) {\{4,8\}};

\node at (1,3) {\{5,8\}};
\node at (3,3) {\{1,6\}};
\node at (5,3) {\{2,7\}};
\node at (7,3) {\{3,8\}};

\node at (0,4) {\{5,7\}};
\node at (2,4) {\{6,8\}};
\node at (4,4) {\{1,7\}};
\node at (6,4) {\{2,8\}};
\node at (8,4) {\{1,3\}};

\draw [->] (0.3,0.3) --(0.7,0.7);
\draw [->] (1.3,1.3) --(1.7,1.7);
\draw [->] (2.3,2.3) --(2.7,2.7);
\draw [->] (3.3,3.3) --(3.7,3.7);

\draw [->] (2.3,0.3) --(2.7,0.7);
\draw [->] (3.3,1.3) --(3.7,1.7);
\draw [->] (4.3,2.3) --(4.7,2.7);
\draw [->] (5.3,3.3) --(5.7,3.7);

\draw [->] (4.3,0.3) --(4.7,0.7);
\draw [->] (5.3,1.3) --(5.7,1.7);
\draw [->] (6.3,2.3) --(6.7,2.7);
\draw [->] (7.3,3.3) --(7.7,3.7);

\draw [->] (6.3,0.3) --(6.7,0.7);
\draw [->] (7.3,1.3) --(7.7,1.7);

\draw [->] (0.3,2.3) --(0.7,2.7);
\draw [->] (1.3,3.3) --(1.7,3.7);

\draw [->] (0.3,3.7) --(0.7,3.3);
\draw [->] (1.3,2.7) --(1.7,2.3);
\draw [->] (2.3,1.7) --(2.7,1.3);
\draw [->] (3.3,0.7) --(3.7,0.3);

\draw [->] (2.3,3.7) --(2.7,3.3);
\draw [->] (3.3,2.7) --(3.7,2.3);
\draw [->] (4.3,1.7) --(4.7,1.3);
\draw [->] (5.3,0.7) --(5.7,0.3);

\draw [->] (4.3,3.7) --(4.7,3.3);
\draw [->] (5.3,2.7) --(5.7,2.3);
\draw [->] (6.3,1.7) --(6.7,1.3);
\draw [->] (7.3,0.7) --(7.7,0.3);

\draw [->] (6.3,3.7) --(6.7,3.3);
\draw [->] (7.3,2.7) --(7.7,2.3);

\draw [->] (0.3,1.7) --(0.7,1.3);
\draw [->] (1.3,0.7) --(1.7,0.3);

\draw [dotted] (0,0.25) --(0,1.75);
\draw [dotted] (0,2.25) --(0,3.75);
\draw [dotted] (8,0.25) --(8,1.75);
\draw [dotted] (8,2.25) --(8,3.75);
\end{tikzpicture}
\caption{The Auslander-Reiten quiver for $\mathsf{C}(A_5)$. The dotted lines are identified with opposite orientations.}
\label{fig:arquiverca5}
\end{figure}
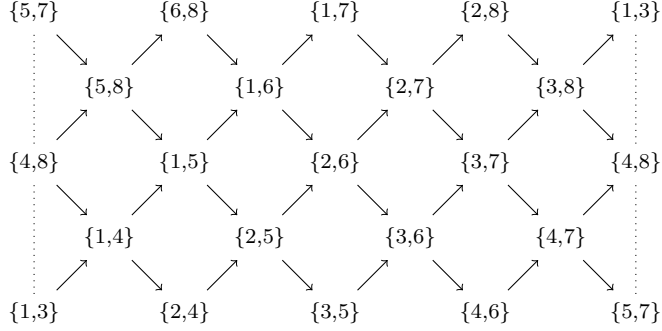

The Auslander-Reiten triangles in $\mathsf{C}(A_n)$ take the form
\[
\{i-1,j-1\}\rightarrow\{i-1,j\}\oplus\{i,j-1\}\rightarrow\{i,j\},
\]
where if $\{i-1,j\}$ or $\{i,j-1\}$ correspond to an edge of $P$, then they should be taken to be zero, see \cite[lem.~3.15]{brustlezhang}. Notice that each AR triangle can be realised from a diamond in the AR quiver. That is, if
\[
\begin{tikzpicture}[scale=0.8]
\node (C) at (1,0) {$c$};
\node (A) at (0,1) {$a$};
\node (D) at (2,1) {$d$};
\node (B) at (1,2) {$b$};

\draw [->, to path={--(\tikztotarget)}] (A) edge (C) (C) edge (D) (A) edge (B) (B) edge (D);
\end{tikzpicture}
\]
is a diamond inside the AR quiver, then
\[
a\rightarrow b\oplus c\rightarrow d,
\]
is an AR triangle. If $a$ and $d$ sit on the upper boundary of the AR quiver, then $b$ should be taken as zero, whereas if $a$ and $d$ sit on the lower boundary, then $c$ is taken to be zero. Note that a frieze $X$ on $\mathsf{C}(A_n)$ satisfies:
\begin{equation}\label{canfrieze}
X(a)X(d)-X(b)X(c)=1
\end{equation}

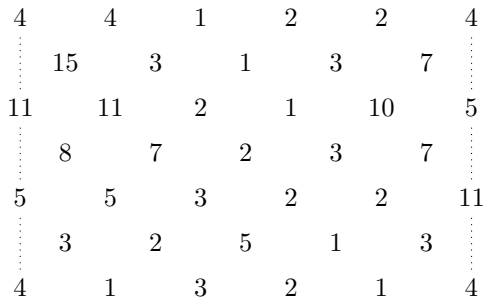
\begin{figure}
\centering
\begin{tikzpicture}[scale=0.6]
\node at (0,0) {4};
\node at (2,0) {1};
\node at (4,0) {3};
\node at (6,0) {2};
\node at (8,0) {1};
\node at (10,0) {4};

\node at (1,1) {3};
\node at (3,1) {2};
\node at (5,1) {5};
\node at (7,1) {1};
\node at (9,1) {3};

\node at (0,2) {5};
\node at (2,2) {5};
\node at (4,2) {3};
\node at (6,2) {2};
\node at (8,2) {2};
\node at (10,2) {11};

\node at (1,3) {8};
\node at (3,3) {7};
\node at (5,3) {2};
\node at (7,3) {3};
\node at (9,3) {7};

\node at (0,4) {11};
\node at (2,4) {11};
\node at (4,4) {2};
\node at (6,4) {1};
\node at (8,4) {10};
\node at (10,4) {5};

\node at (1,5) {15};
\node at (3,5) {3};
\node at (5,5) {1};
\node at (7,5) {3};
\node at (9,5) {7};

\node at (0,6) {4};
\node at (2,6) {4};
\node at (4,6) {1};
\node at (6,6) {2};
\node at (8,6) {2};
\node at (10,6) {4};

\draw[dotted] (0,0.4) --(0,1.6);
\draw[dotted] (0,2.4) --(0,3.6);
\draw[dotted] (0,4.4) --(0,5.6);
\draw[dotted] (10,0.4) --(10,1.6);
\draw[dotted] (10,2.4) --(10,3.6);
\draw[dotted] (10,4.4) --(10,5.6);
\end{tikzpicture}
\caption{A frieze on the cluster category of Dynkin type $A_7$. The dotted lines are identified with opposite orientations.}
\label{fig:ca7frieze}
\end{figure}

\subsection{The Caldero-Chapoton Map}\label{sec:theccmap}
The (original) Caldero-Chapoton map is a map which sends certain (so-called reachable) indecomposable objects of a cluster category to cluster variables of the corresponding cluster algebra, see \cite[sec.~4.1]{calderokeller}. The map, which we denote by $\gamma$, depends on a cluster tilting object $T$ inside the cluster category and makes precise the idea that the cluster category is a categorification of the cluster algebra. It is required that the category is 2-Calabi-Yau (for example a cluster category), and it is a well known property of $\gamma$ that it is a frieze (see \cite[def.~1.1]{assemdupont}, \cite[prop.~3.10]{hallalgebras}, \cite[thm.]{dominguezgeiss}). That is, the Caldero-Chapoton map is a map $\gamma\,:\,$obj$\,\mathsf{C}\rightarrow A$, where $A$ is a certain ring, which satisfies the property in Equation (\ref{intro:def:frieze}), as well as the exponential conditions in (\ref{eqn:friezeexponential}).

\subsection{Frieze Patterns}
Frieze patterns were first introduced by Conway and Coxeter in \cite{conwaycoxeter1} and \cite{conwaycoxeter2}. An example of such a frieze pattern, known as a Conway-Coxeter frieze, is given in Figure \ref{fig:ca7frieze}. This frieze pattern is obtained from the original Caldero-Chapoton map $\gamma$ by omitting the arrows from the AR quiver of $\mathsf{C}(A_7)$ and replacing each vertex by the value of $\gamma$ applied to that indecomposable.
\par
In formal terms, for some positive integer $n$, a frieze pattern is an array of $n$ offset rows of positive integers. Each diamond
\[
\begin{tikzpicture}[scale=0.6]
\node (C) at (1,0) {$\delta$};
\node (A) at (0,1) {$\beta$};
\node (D) at (2,1) {$\eta$};
\node (B) at (1,2) {$\alpha$};
\end{tikzpicture}
\]
in a frieze pattern satisfies a so-called determinant property in that
\[
\beta\eta-\alpha\delta=1.
\]
If $\beta$ and $\eta$ are on the top row of the frieze, then the determinant property becomes
\[
\beta\eta-\delta=1,
\]
and if $\beta$ and $\eta$ are on the bottom row, the determinant property becomes
\[
\beta\eta-\alpha=1.
\]
Observe that the Caldero-Chapoton map satisfies these equations by virtue of Equation (\ref{canfrieze}), that is, because it is a frieze. Frieze patterns are known to be invariant under a glide reflection. A region of the frieze pattern, known as a fundamental domain, is enough to produce the whole frieze pattern by repeatedly performing a glide reflection.

\subsection{A Modified Caldero-Chapoton Map}\label{sec:modccmap}
We assume in the rest of the paper that $\mathsf{C}$ is an essentially small, $\mathbb{C}$-linear, Hom-finite, triangulated category, which is Krull-Schmidt and has AR triangles. In \cite{friezes1}, Holm and J{\o}rgensen introduce a modified version of the Caldero-Chapoton map, which we denote by $\pi$, that relies on a rigid object $R\in\,$obj$\,\mathsf{C}$, a much weaker condition than that of being a cluster tilting object. We say that an object $R$ is rigid if 
\[
\textnormal{Hom}_\mathsf{C}(R,\Sigma R)=0.
\]
We also note that $\pi$ does not require that the category is 2-Calabi-Yau, allowing us to work with a category $\mathsf{C}$ that is more general than a cluster category.
\par
Consider the endomorphism ring $E=$\,End$_\mathsf{C}(R)$, and define $\mathsf{mod}\,E$ to be the category of finite dimensional right $E$-modules. Then, there is a functor
\[
G\,:\,\mathsf{C}\rightarrow\mathsf{mod}E
\]
\vspace{-5mm}
\[
\hspace{2cm}c\mapsto\textnormal{Hom}_\mathsf{C}(R,\Sigma c).
\]
For some object $c\in\mathsf{C}$, the modified Caldero-Chapoton map is then defined by the formula:
\begin{equation}\label{intro:def:modccmaparithmetic}
\pi(c)=\chi(\textnormal{Gr}(Gc)),
\end{equation}
where Gr denotes the Grassmannian of submodules and $\chi$ is the Euler characteristic defined by cohomology with compact support (see \cite[p.~93]{Fulton}).
\par
It is proved in \cite{friezes1} that $\pi$ is a generalised frieze; that is, as well as the exponential properties given in Equation (\ref{eqn:friezeexponential}) it satisfies the property given in Equation (\ref{intro:def:genfrieze}). 
\par
 Define $\mathsf{R}=\,$add$\,R$, the full subcategory whose objects are finite direct sums of the summands of $R$ (see Section \ref{sec:introductory} for details of this setup). This full subcategory, which is clearly closed under direct sums and summands, is rigid in the sense that Hom$_\mathsf{C}(\mathsf{R},\Sigma\mathsf{R})=0$. A multiplication formula for computing $\pi$ is also proved in \cite{friezes1}.  Let $m\in\,$indec$\,\mathsf{C}$ and $r\in\,$indec$\,\mathsf{R}$ satisfy that Ext$_\mathsf{C}^1(m,r)$ and Ext$_\mathsf{C}^1(r,m)$ both have dimension one over $\mathbb{C}$. Then, there are nonsplit triangles
\begin{equation}\label{eqn:intrononsplitextensions}
\xymatrix{
m\ar[r]^\mu& a\ar[r]^-\gamma & r \ar[r]^-\delta &\Sigma m~,~r\ar[r]^-\sigma & b\ar[r]^\eta & m \ar[r]^-\zeta &\Sigma r,
}
\end{equation}
that are unique up to isomorphism. It is proved in \cite{friezes1} that 
\begin{equation}\label{eqn:originalextensionintro}
\pi(m)=\pi(a)+\pi(b).
\end{equation}
This formula can be applied iteratively to compute values of $\pi$.
\par
In \cite{friezes2} Holm and J{\o}rgensen redefine the modified Caldero-Chapoton map in a more general manner (the work in \cite{friezes1} is a special case of that in \cite{friezes2}). They define $\rho$ by
\begin{equation}\label{eqn:introccmap}
\rho(c)=\alpha(c)\sum_e\chi(\textnormal{Gr}_e(Gc))\beta(e).
\end{equation}
Here, the sum is taken over $e\in\,$K$_0(\mathsf{mod}\,E)$, the Grothendieck group of the abelian category $\mathsf{mod}\,E$, and Gr$_e(Gc)$ is the Grassmannian of $E$-submodules $M\subseteq Gc$ with K$_0$-class satisfying $[M]=e$. The maps
\begin{equation}\label{intro:def:alphabeta}
 \alpha:\,\mathrm{obj}\,\mathsf{C}\rightarrow A ~\mathrm{and}~ \beta:\,\mathrm{K}_0(\mathsf{mod}\,E)\rightarrow A 
\end{equation}
are both exponential maps in the sense that
\begin{equation}\label{eqn:alphabetaexponential}
\begin{gathered}
\alpha(0)=1~,~\alpha(x\oplus y)=\alpha(x)\alpha(y),\\
\beta(0)=1~,~\beta(e+f)=\beta(e)\beta(f),
\end{gathered}
\end{equation}
and $A$ is some commutative ring, see \cite[setup~1.1]{friezes2}.
\par
When the maps $\alpha$ and $\beta$ satisfy a technical ``frieze-like" condition, given in \cite[def.~1.4]{friezes2}, the map $\rho$ becomes a generalised frieze, as in Equation (\ref{intro:def:genfrieze}).

\subsection{This Paper}
In this paper, we show a simpler condition on $\alpha$ and $\beta$ than that in \cite{friezes2}, which implies that $\rho$ is a generalised frieze. We also show that a similar multiplication formula to that in Equation (\ref{eqn:originalextensionintro}) holds with $\rho$ instead of $\pi$. This permits a simpler iterative procedure for computing $\rho$ than the one given in \cite{friezes2}.
\par
We give the following definition:
\newline
\paragraph{\textbf{Definition}}\textbf{\ref{def:conditionf}} (Condition F).
\textit{
We say that the maps $\alpha$ and $\beta$ satisfy Condition F if for each triangle
\begin{equation*}
\xymatrix{
x \ar[r]^{\varphi} & y \ar[r]^{\omega} & z \ar[r]^{\psi} & \Sigma x
}
\end{equation*}
in $\mathsf{C}$ such that $Gx$, $Gy$ and $Gz$ have finite length in $\mathsf{Mod}E$, the following property holds:
\[
\alpha(y)=\alpha(x\oplus z)\beta([\textnormal{Ker}\,G\varphi]).
\]
}
\paragraph{}
The following main result shows that when $\mathsf{C}$ is 2-Calabi-Yau, this definition is sufficient to replace the frieze-like condition from \cite[def.~1.4]{friezes2}.
\newline
\paragraph{\textbf{Theorem}}\textbf{\ref{alphabetadef}}
\textit{
Assume that the exponential maps $\alpha : \textnormal{obj}\,\mathsf{C}\rightarrow A$ and $\beta : $K$_0(\mathsf{mod}E)\rightarrow A$ from Equation (\ref{intro:def:alphabeta}) satisfy Condition F from Definition~\ref{def:conditionf}. Then, the modified Caldero-Chapoton map $\rho$ of Equation (\ref{eqn:introccmap}) is a generalised frieze in the sense that it satisfies Equation (\ref{intro:def:genfrieze}), as well as the exponential conditions in (\ref{eqn:friezeexponential}).
\newline
}
\paragraph{}

We then proceed by proving in Lemma \ref{alphabeta} that the construction of $\alpha$ and $\beta$ in \cite[def.~2.8]{friezes2} satisfies Condition F. Then, Theorem \ref{alphabetadef}, together with Lemma \ref{alphabeta}, recovers a main result of \cite{friezes2}, proving that the construction of $\alpha$ and $\beta$ in \cite[def.~2.8]{friezes2} results in $\rho$ being a generalised frieze.

In Section \ref{extensionadaption}, we prove the multiplication formula for $\rho$, similar to the arithmetic case for $\pi$ in (\ref{eqn:originalextensionintro}). This multiplication formula is as follows:
\newline
\paragraph{\textbf{Theorem}}\textbf{\ref{thm:extensionadaption}}
\textit{
Let $m\in\textnormal{indec}\,\mathsf{C}$ and $r\in\textnormal{indec}\,\mathsf{R}$ such that $\textnormal{Ext}^{1}_{\mathsf{C}}(r,m)$ and $\textnormal{Ext}^{1}_{\mathsf{C}}(m,r)$ both have dimension one over $\mathbb{C}$. Then, there are nonsplit triangles 
\[
\xymatrix{
m \ar[r]^-\mu & a \ar[r]^-\gamma & r \ar[r]^-\delta & \Sigma m 
&and&
r \ar[r]^-\sigma & b \ar[r]^-\eta & m \ar[r]^-\zeta & \Sigma r,
}
\]
with $\delta$ and $\zeta$ nonzero. Let $Gm$ have finite length in $\mathsf{Mod}E$, then
\[
\rho(r)\rho(m)=\rho(a)+\rho(b).
\]
\newline
}
\paragraph{}
\vspace{-5mm}
This theorem can be applied inductively in order to simplify the computation of values of $\rho$. For some indecomposable $m$ in $\mathsf{C}$, one may find an $a$ and $b$ (which are the middle terms of the nonsplit extensions), such that $\rho(m)=\rho(a)+\rho(b)$. The theorem can then be reapplied to find each of $\rho(a)$ and $\rho(b)$. The process will eventually terminate at the stage where calculating $\rho$ of some indecomposable reduces to calculating the index of that indecomposable. Substituting back into the equation allows a simple calculation of $\rho(m)$.

We illustrate the procedure in Section \ref{sec:ca5example} by computing $\rho$ of an indecomposable in the Auslander-Reiten quiver for $\mathsf{C}(A_5)$. This retrieves one of the vertices of the AR quiver in Figure \ref{fig:ca5quiver}. Note that this example already appeared in \cite[sec.~3]{friezes2}, but Theorem \ref{thm:extensionadaption} makes our computation much simpler.

\begin{figure} 
\centering
\begin{tikzpicture}[scale=0.6]\hspace{8mm}
\hspace{-1cm}
\node (A) at (0,0) {$\frac{1+uv+vz}{v}$};
\node (B) at (0,4) {$\frac{1+uv+vz}{u}$};
\node (C) at (0,8) {$z$};
\node (D) at (2,2) {$1+uv+vz$};
\node (E) at (2,6) {$\frac{u+z}{u}$};
\node (F) at (4,0) {$v$};
\node (G) at (4,4) {$u+z$};
\node (H) at (4,8) {$\frac{u+z}{uz}$};
\node (I) at (6,2) {$1$};
\node (J) at (6,6) {$\frac{u+z}{z}$};
\node (K) at (8,0) {$\frac{1}{v}$};
\node (L) at (8,4) {$\frac{1}{z}$};
\node (M) at (8,8) {$u$};
\node (N) at (10,2) {$\frac{1+vz}{vz}$};
\node (O) at (10,6) {$1$};
\node (P) at (12,0) {$\frac{1+vz}{z}$};
\node (Q) at (12,4) {$\frac{1+vz}{v}$};
\node (R) at (12,8) {$\frac{1}{u}$};
\node (S) at (14,2) {$1+vz$};
\node (T) at (14,6) {$\frac{1+uv+vz}{uv}$};
\node (U) at (16,0) {$z$};
\node (V) at (16,4) {$\frac{1+uv+vz}{u}$};
\node (W) at (16,8) {$\frac{1+uv+vz}{v}$};

\draw[dotted, to path={--(\tikztotarget)}] (A) edge (B) (B) edge (C) (U) edge (V) (V) edge (W);

\draw [->, to path={--(\tikztotarget)}] (A) edge (D) (D) edge (G) (G) edge (J) (J) edge (M) (B) edge (E) (E) edge (H) (C) edge (E) (E) edge (G) (B) edge (D) (H) edge (J) (F) edge (I) (I) edge (L) (L) edge (O) (O) edge (R) (D) edge (F) (G) edge (I) (J) edge (L) (M) edge (O) (K) edge (N) (N) edge (Q) (Q) edge (T) (T) edge (W) (I) edge (K) (L) edge (N) (O) edge (Q) (R) edge (T) (P) edge (S) (S) edge (V) (N) edge (P) (Q) edge (S) (T) edge (V) (S) edge (U);


\end{tikzpicture}
\caption{The Auslander-Reiten quiver of the cluster category of Dynkin type $A_5$. The vertices have been replaced with values of the modified Caldero-Chapoton map $\rho$. Again, the dotted lines are identified with oppostite orientations.}
\label{fig:ca5quiver}
\end{figure}
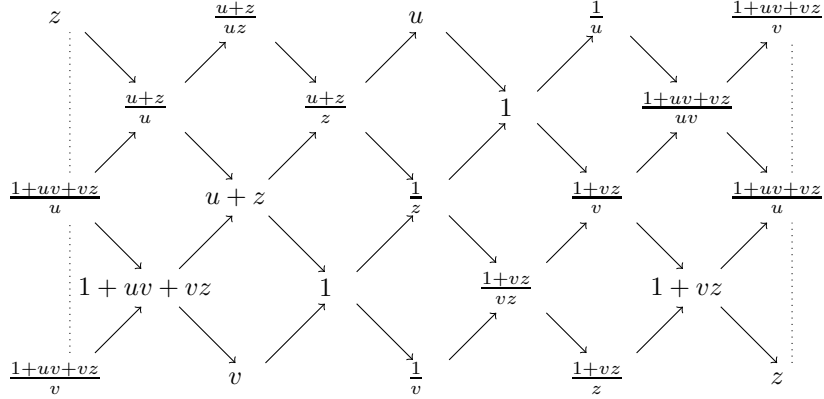

\par
This paper is organised as follows. Section \ref{sec:introductory} gives an essential background to the modified Caldero-Chapoton map, and explains some important results from \cite{friezes2}. Section \ref{sec:conditionf} introduces Condition F and proves Theorem \ref{alphabetadef}, whilst Section \ref{constructingf} shows how one can construct maps $\alpha$ and $\beta$ satisfying this condition. Section \ref{extension formula from friezes1} demonstrates how to find the multiplication formula for $\pi$, Section \ref{extensionadaption} adapts this formula to $\rho$ by proving the multiplication formula in Theorem \ref{thm:extensionadaption}, and Section \ref{sec:ca5example} shows why this formula is useful.
\par
It should be noted that Sections \ref{sec:introductory} and \ref{extension formula from friezes1} contain no original work, however they provide an essential setup for subsequent sections in the paper.

\section{A Modified Caldero-Chapoton Map: A Functorial Viewpoint} \label{sec:introductory}
In this section, we redefine the modified Caldero-Chapoton map in detail, using an equivalent, functorial viewpoint, allowing us simpler calculations throughout the rest of the paper. To set up,  we will follow the construction in \cite[sec.~2]{friezes2}. We add the assumption that $\mathsf{C}$ is 2-Calabi-Yau. We let $\mathsf{R}$ be a functorially finite subcategory of $\mathsf{C}$, which is closed under sums and summands and rigid; that is, $\textnormal{Hom}_{\mathsf{C}}(\mathsf{R},\Sigma \mathsf{R})=0$.
\par
We also assume that  $\mathsf{C}$ has a cluster tilting subcategory $\mathsf{T}$, belonging to a cluster structure in the sense of \cite[sec.~II.1]{clusterstructures}. We additionally require that $\mathsf{R}\subseteq\mathsf{T}$. Note that the Auslander-Reiten translation for $\mathsf{C}$ is
\[
\tau =\Sigma,
\]
and its Serre functor is $S=\Sigma^2$.
\par
There is a functor $G$, defined by:
\begin{equation}\label{def:thefunctorG}
\begin{split}
G: \mathsf{C} &\rightarrow \mathsf{Mod\,R}\\
c &\mapsto \textnormal{Hom}_{\mathsf{C}}(-,\Sigma c)|_{\mathsf{R}}
\end{split}
\end{equation}
where Mod\,$\mathsf{R}$ denotes the category of $\mathbb{C}$-linear contravariant functors $\mathsf{R} \rightarrow \mathsf{Vect}\,\mathbb{C}$. It is a $\mathbb{C}$-linear abelian category, and we denote by $\mathsf{fl\,R}$ the full subcategory of $\mathsf{Mod\,R}$, consisting of all finite length objects.
\par
The modified Caldero-Chapoton map is then defined as in Equation (\ref{eqn:introccmap}), where the sum is now taken over $e\in\,$K$_0(\mathsf{fl\,R})$. Here, we have
\begin{equation}\label{eqn:functorialalphabeta}
\alpha:\mathrm{obj}\,\mathsf{C}\rightarrow A~,~\beta:\mathrm{K}_0(\mathsf{fl\,R})\rightarrow A,
\end{equation}
 and these maps satisfy the exponential condition in Equation (\ref{eqn:alphabetaexponential})
\par
For two objects $a,b\in\mathsf{C}$ such that $Ga$ and $Gb$ have finite length, it is known by \cite[prop.~1.3]{friezes2} that $\rho$ is also exponential; that is,
\[
\rho(0)=1~,~\rho(a\oplus b)=\rho(a)\rho(b).
\]
It therefore suffices to calculate $\rho$ for each indecomposable object in $\mathsf{C}$.  Note that the formula for $\rho$ only makes sense when $Gc$ has finite length in $\mathsf{Mod\,R}$. That is, we require that $Gc\in\mathsf{fl\,R}$.

\section{Condition F on the maps $\alpha$ and $\beta$}\label{sec:conditionf}
We continue under the setup of Section \ref{sec:introductory}. Consider the exponential maps $\alpha$ and $\beta$ introduced earlier in Equation (\ref{eqn:functorialalphabeta}).
The following definition describes a canonical condition on $\alpha$ and $\beta$, which will later be used in Section \ref{extensionadaption} to prove a multiplication formula for $\rho$.
\begin{definition}{(Condition F).} \label{def:conditionf}
We say that the maps $\alpha$ and $\beta$ satisfy Condition F if, for each triangle
\begin{equation}
\label{xyztriangle}
\xymatrix{
x \ar[r]^{\varphi} & y \ar[r]^{\omega} & z \ar[r]^{\psi} & \Sigma x
}
\end{equation}
in $\mathsf{C}$, such that $Gx$, $Gy$ and $Gz$ have finite length in $\mathsf{Mod\,R}$, the following property holds:
\[
\alpha(y)=\alpha(x\oplus z)\beta([\textnormal{Ker}\,G\varphi]).
\]
\end{definition}
We now prove a theorem showing that in the case when $\mathsf{C}$ is 2-Calabi-Yau, Condition F can replace the definition of the frieze-like condition defined in \cite[def.~1.4]{friezes2}.

\begin{theorem}\label{alphabetadef}
Assume that the exponential maps $\alpha : \textnormal{obj}\,\mathsf{C}\rightarrow A$ and $\beta : $K$_0(\mathsf{fl\,R})\rightarrow A$ from Equation (\ref{eqn:functorialalphabeta}) satisfy Condition F from Definition~\ref{def:conditionf}. Then, the modified Caldero-Chapoton map $\rho$, defined in Equation~(\ref{eqn:introccmap}), is a generalised frieze in the sense that, where defined, it satisfies Equation (\ref{intro:def:genfrieze}), as well as the exponential conditions in (\ref{eqn:friezeexponential}).
\end{theorem}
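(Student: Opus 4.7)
The proof breaks into two parts. For the exponential conditions \eqref{eqn:friezeexponential}, no appeal to Condition F is needed: $\rho(0)=1$ and $\rho(a\oplus b)=\rho(a)\rho(b)$ follow from the exponentiality of $\alpha$ and $\beta$, the fact that $G$ preserves direct sums, the decomposition $\textnormal{Gr}_e(M\oplus N)=\bigsqcup_{e_1+e_2=e}\textnormal{Gr}_{e_1}(M)\times\textnormal{Gr}_{e_2}(N)$, and the multiplicativity of the compactly supported Euler characteristic on products. I would simply cite \cite[prop.~1.3]{friezes2}, whose proof rests only on these ingredients.

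The substance lies in the generalised-frieze identity \eqref{intro:def:genfrieze}. Let $\Sigma x \xrightarrow{\varphi} y \xrightarrow{\omega} x \xrightarrow{\psi} \Sigma^2 x$ be the AR triangle ending at an indecomposable $x$; it has the required form since $\tau=\Sigma$ in the 2-Calabi-Yau setting. I would restrict to the case where $G\Sigma x$, $Gy$, $Gx$ all lie in $\mathsf{fl\,R}$, the only case in which the assertion is non-vacuous. Applying Condition F to this triangle yields the factorisation
\[
\alpha(y)\;=\;\alpha(\Sigma x)\,\alpha(x)\,\beta\bigl([\textnormal{Ker}\,G\varphi]\bigr).
\]
Writing $\chi_\beta(M):=\sum_{e}\chi(\textnormal{Gr}_e(M))\beta(e)$ for brevity, substitution into \eqref{eqn:introccmap} together with multiplicativity of $\beta$ collects the difference as
\[
\rho(\Sigma x)\rho(x)-\rho(y)\;=\;\alpha(\Sigma x)\alpha(x)\Bigl[\chi_\beta(G\Sigma x)\,\chi_\beta(Gx)\;-\;\beta\bigl([\textnormal{Ker}\,G\varphi]\bigr)\,\chi_\beta(Gy)\Bigr].
\]

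The main obstacle is to show that this expression lies in $\{0,1\}$. The plan is to apply the cohomological functor $G$ to the AR triangle and analyse the resulting long exact sequence in $\mathsf{Mod\,R}$; thanks to rigidity of $\mathsf{R}$, the Auslander-Reiten property of $\varphi$, and the 2-Calabi-Yau hypothesis, this sequence should collapse to a short exact sequence of finite-length $\mathsf{R}$-modules in which $\textnormal{Ker}\,G\varphi$ appears explicitly as the leftmost term. A Caldero-Chapoton style Grassmannian comparison applied to this short exact sequence then produces the required bracket identity, with the dichotomy ``$0$ versus $1$'' recording whether a particular connecting morphism in the long exact sequence vanishes. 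To match the $\alpha(\Sigma x)\alpha(x)$ prefactor cleanly, I expect to invoke Condition F on further auxiliary triangles: for example, the triangle $x\to 0\to\Sigma x\to\Sigma x$ forces the consistency relation $\alpha(x)\alpha(\Sigma x)\beta([Gx])=1$, which combines with the Grassmannian bracket identity to give exactly $0$ or $1$. This matching is the most delicate step, and I would adapt the corresponding arithmetic argument for $\pi$ from \cite{friezes1} and the frieze-like analysis in \cite{friezes2} to carry it out.
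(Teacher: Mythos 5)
Your proposal and the paper's proof diverge at the very first step. The paper does not try to establish $\rho(\Sigma x)\rho(x)-\rho(y)\in\{0,1\}$ directly from Condition F; instead it cites \cite[thm.~1.6]{friezes2}, which already says that this dichotomy holds whenever $\alpha$ and $\beta$ are ``frieze-like'' for the AR triangle $\Delta$ in the sense of \cite[def.~1.4]{friezes2}. The entire content of the paper's proof is therefore to verify that Condition F implies the frieze-like condition, and this is done by a case analysis on the end term $c$ of $\Delta=(\Sigma c\to b\to c)$: (i)~$c\notin\mathsf{R}\cup\Sigma^{-1}\mathsf{R}$ with $G(\Delta)$ split, (ii)~$c\notin\mathsf{R}\cup\Sigma^{-1}\mathsf{R}$ with $G(\Delta)$ nonsplit, or $c\in\Sigma^{-1}\mathsf{R}$, and (iii)~$c\in\mathsf{R}$. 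In each case one applies Condition F to $\Delta$ itself and to the auxiliary triangle $c\to 0\to\Sigma c\to\Sigma c$, then reads off the required equalities from the exponentiality of $\alpha$ and $\beta$. No Grassmannian computation appears at all: that machinery is entirely delegated to \cite[thm.~1.6]{friezes2}. You did correctly spot the auxiliary triangle and the resulting relation $\alpha(x)\alpha(\Sigma x)\beta([Gx])=1$, which is precisely how the paper handles Cases (ii) and (iii).

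The part of your proposal that would need substantial repair is the sentence claiming the long exact sequence obtained by applying $G$ ``should collapse to a short exact sequence of finite-length $\mathsf{R}$-modules in which $\textnormal{Ker}\,G\varphi$ appears explicitly as the leftmost term.'' That is true only when $c\notin\mathsf{R}\cup\Sigma^{-1}\mathsf{R}$ (this is \cite[lem.~1.12(iii)]{friezes1}). When $c=\Sigma^{-1}r\in\Sigma^{-1}\mathsf{R}$ one instead gets $0\to\mathrm{rad}\,P_r\to P_r$, and when $c=r\in\mathsf{R}$ one gets $I_r\to\mathrm{corad}\,I_r\to 0$ with $\textnormal{Ker}\,G\xi=S_r$; neither is a short exact sequence, and these are exactly the cases that decide whether the answer is $0$ or $1$. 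Without separating these cases, the Grassmannian comparison you describe cannot be carried out. So while your outline captures the right ingredients, it is effectively an attempt to rebuild \cite[thm.~1.6]{friezes2} from scratch, and the missing case analysis is precisely the part that the paper avoids having to redo by reducing to that theorem.
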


\begin{proof}
Note that by \cite[thm.~1.6]{friezes2}, if 
\[
\xymatrix@R-1.8pc{
\Delta  =\big(  \Sigma c \ar[r]^-\xi & b\ar[r] & c\big)
}
\]
is an AR triangle in $\mathsf{C}$ such that $Gc$ and $G\Sigma c$ have finite length in $\mathsf{Mod\,R}$, and $\alpha$ and $\beta$ are frieze-like maps for $\Delta$ in the sense of \cite[def.~1.4]{friezes2}, then $\rho(\Sigma c)\rho(c)-\rho(b)\in\{0,1\}$. Hence, it suffices to show that $\alpha$ and $\beta$ are frieze-like for each $\Delta$.
We first note that if $c\notin\mathsf{R}\cup\Sigma^{-1}\mathsf{R}$, then $G(\Delta)$ is a short exact sequence, see \cite[lem.~1.12(iii)]{friezes1}. We now check the three cases of \cite[def.~1.4]{friezes2}.
\par
\textit{Case (i)}: Assume $c\notin\mathsf{R}\cup\Sigma^{-1}\mathsf{R}$ and $G(\Delta)$ is a split short exact sequence. That is,
\[
\xymatrix{
0 \ar[r] & G(\Sigma c) \ar[r]^{G\xi} & Gb \ar[r] & Gc \ar[r] & 0,
}
\]
is split short exact. It follows immediately that $G\xi$ has trivial kernel. Applying Condition F to $\Delta$, we obtain:
\begin{equation*}
\begin{split}
\alpha(b) & = \alpha(c\oplus\Sigma c)\beta([\textnormal{Ker}\,G\xi]) \\
& = \alpha(c\oplus\Sigma c)\beta(0)\\
& = \alpha(c\oplus\Sigma c),
\end{split}
\end{equation*}
where the final = is due to $\beta$ being exponential.
\par
\textit{Case (ii), first part}: Assume $c\notin\mathsf{R}\cup\Sigma^{-1}\mathsf{R}$ and $G(\Delta)$ is a nonsplit short exact sequence. Then, by the same working as in Case (i), we see that:
\[
\alpha(b)=\alpha(c\oplus\Sigma c).
\]
Now, consider the following triangle in $\mathsf{C}$:
\[
\xymatrix{
c \ar[r]^\nu & 0 \ar[r] & \Sigma c \ar[r]^{1_{\Sigma c}} & \Sigma c.
}
\]
Applying Condition F and using the fact that $\alpha$ is exponential, we obtain:
\begin{equation*}
\begin{split}
1 & = \alpha(0) \\
& = \alpha(c\oplus\Sigma c)\beta([\textnormal{Ker}\,G\nu]) \\
& = \alpha(c\oplus\Sigma c)\beta([Gc]).
\end{split}
\end{equation*}
We note that this manipulation works for any $c\in\,$obj$\,\mathsf{C}$.
\par
\textit{Case (ii), second part}: Let $c=\Sigma^{-1}r\in\Sigma^{-1}\mathsf{R}$. We showed in the first part of Case (ii) that
\[
\alpha(c\oplus\Sigma c)\beta([Gc])=1.
\]
Now, by \cite[lem.~1.12(i)]{friezes1}, $G(\Delta)$ becomes:
\[
G(\Delta)=0\rightarrow \textnormal{rad}\,P_r \rightarrow P_r , 
\]
where $P_r$ is the indecomposable projective Hom$_{\mathsf{R}}(-,r)$ in $\mathsf{Mod\,R}$, described in \cite[sec.~1.5]{friezes1}. Therefore, $G\xi$ has zero kernel, and applyng Condition F shows:
\begin{equation*}
\begin{split}
\alpha(b) & = \alpha(c\oplus\Sigma c)\beta([\textnormal{Ker}\,G\xi])\\
& = \alpha(c\oplus\Sigma c)\beta(0)\\
& = \alpha(c\oplus\Sigma c).
\end{split}
\end{equation*}
\par
\textit{Case (iii)}: Let $c=r\in\mathsf{R}$ and again consider the triangle
\[
\xymatrix{
c\ar[r]^\nu & 0 \ar[r] & \Sigma c \ar[r]^{1_{\Sigma c}} & \Sigma c,
}
\]
in $\mathsf{C}$. Applying Condition F, we see
\begin{equation*}
\begin{split}
1 & = \alpha(0) \\
& = \alpha(c\oplus\Sigma c)\beta([\textnormal{Ker}\,G\nu]) \\
& = \alpha(c\oplus\Sigma c)\beta(0) \\
& = \alpha(c\oplus\Sigma c),
\end{split}
\end{equation*}
where the third = is since $G(c)=G(r)=0$ and hence Ker$\,G\nu=0$. 
\newline
Now, by \cite[lem.~1.12(ii)]{friezes1}, applying $G$ to $\Delta$ gives the exact sequence:
\[
\xymatrix{
I_r \ar[r]^-{G\xi} & \textnormal{corad}\,I_r \ar[r] & 0,
}
\]
where $I_r$ is the indecomposable injective  Hom$_{\mathsf{R}}(-,\Sigma^2r)$ in $\mathsf{Mod\,R}$, described in \cite[sec.~1.10]{friezes1} and corad$\,I_r$ is its coradical. Additionally, by \cite[sec.~1.10]{friezes1}, we know that there is the following short exact sequence:
\[
0 \rightarrow S_r \rightarrow I_r \rightarrow \textnormal{corad}\,I_r \rightarrow 0,
\]
where $S_r$  in $\mathsf{Mod\,R}$ is the simple object supported at $r$, see \cite[prop.~2.2]{repartin2}. It follows that $\textnormal{corad}\,I_r\cong I_r/S_r$ and we see that Ker$\,G\xi=S_r$. So, by applying Condition F once again, we see that
\begin{equation*}
\begin{split}
\alpha(b) & = \alpha(c\oplus\Sigma c)\beta([\textnormal{Ker}\,G\xi])\\
& = \beta([\textnormal{Ker}\,G\xi])\\
& = \beta([S_r]),
\end{split}
\end{equation*}
where the second = is since $\alpha(c\oplus\Sigma c)=1$.
\end{proof}

\section{Constructing Maps that Satisfy Condition F} \label{constructingf}
We again continue under the setup of Section \ref{sec:introductory}. We will show that there exist maps $\alpha$ and $\beta$ satisfying Condition F, namely those given in \cite[def.~2.8]{friezes2}. Let us first look at the necessary constructions behind the definitions of $\alpha$ and $\beta$ in \cite[def.~2.8]{friezes2}.
\par
Recall that $\mathsf{T}$ is some cluster tilting subcategory of $\mathsf{C}$ with $\mathsf{R}\subseteq\mathsf{T}$, and denote by indec$\,\mathsf{T}$ the set of indecomposable objects in $\mathsf{T}$. For each $t\in\,$indec$\,\mathsf{T}$, one may find a unique indecomposable $t^*\in\,$indec$\,\mathsf{C}$, called the mutation of $t$, such that replacing $t$ with $t^*$ gives rise to another cluster tilting subcategory $\mathsf{T}^*$, see \cite[sec.~II.1]{clusterstructures}. Each such $t$ and $t^*$ fit into two exchange triangles (see \cite[sec.~II.1]{clusterstructures}):
\begin{equation}\label{exchangetriangles}
t^* \rightarrow a \rightarrow t~,~t\rightarrow a'\rightarrow t^*,
\end{equation}
where $a,a'\in\,$add$\,\big(($indec$\,\mathsf{T})\setminus t\big)$.
\par
We denote by K$_0^{\textnormal{split}}(\mathsf{T})$ the split Grothendieck group of the additive category $\mathsf{T}$ which has a basis formed by the set of indecomposables in $\mathsf{T}$. We note that K$_0^{\textnormal{split}}(\mathsf{T})$ carries the relations that $[a\oplus b]=[a]+[b]$, where $[a]$ is used to denote the K$_0^{\textnormal{split}}$-class of the object $a$ in $\mathsf{T}$.
\par
Define $\mathsf{S}$ to be the full subcategory of $\mathsf{C}$ which is closed under direct sums and summands and has
\begin{equation}\label{def:indecs}
 \mathrm{indec}\,\mathsf{S}=\mathrm{indec}\,\mathsf{T}\setminus\mathrm{indec}\,\mathsf{R},
\end{equation}
and then consider the subgroup
\begin{equation}\label{def:N}
N=
\left<
[a]-[a']~ \Bigg|
\begin{array}{l} s^*\rightarrow a\rightarrow s~,~ s\rightarrow a'\rightarrow s^*\, \textnormal{are exchange}\\
\textnormal{triangles with}\, s\in\textnormal{indec}\,\mathsf{S}
\end{array}
\right>
\end{equation}
of K$_0^{\textnormal{split}}(\mathsf{T})$, defined in \cite[def.~2.4]{friezes2}. Then, we denote by $Q$ the canonical surjection
\[
Q\,:\, \textnormal{K}_0^{\textnormal{split}}(\mathsf{T}) \twoheadrightarrow \textnormal{K}_0^{\textnormal{split}}(\mathsf{T})/N~,~Q([t])=[t]+N.
\]
\par
Now, for each $c\in\,$obj$\,\mathsf{C}$, we may construct the element ind$_{\mathsf{T}}(c)\in\,$K$_0^{\textnormal{split}}(\mathsf{T})$, called the index of c with respect to the cluster tilting subcategory $\mathsf{T}$. There exists a triangle $t'\rightarrow t\rightarrow c$ with $t,t'\in\mathsf{T}$ (see \cite[sec.~1]{dehykeller}). The index is then defined as:
\[
\textnormal{ind}_{\mathsf{T}}(c)=[t]-[t'],
\]
and is a well defined element of $\textnormal{K}_0^{\textnormal{split}}(\mathsf{T})$.
\par
Before giving the definition of $\alpha$ and $\beta$ from \cite[def.~2.8]{friezes2}, it remains to recall how to construct the homomorphism $\theta$ from \cite[sec.~2.6]{friezes2}. We do this by following the constructions in \cite[sec.~2.5]{friezes2}. Since $\mathsf{R}\subseteq\mathsf{T}$, the inclusion functor $i\,:\, \mathsf{R}\hookrightarrow\mathsf{T}$ induces the exact functor
\[
i^*:\,\mathsf{Mod\,T}\rightarrow \mathsf{Mod\,R},~ i^*(F)=F|_\mathsf{R},
\]
where $\mathsf{Mod\,T}$ is the abelian category of $\mathbb{C}$-linear contravariant functors $\mathsf{T}\rightarrow \mathsf{Vect}\,\mathbb{C}$. Now, by \cite[prop.~2.3(b)]{repartin2}, for each $t\in\,$indec$\,\mathsf{T}$, there is a simple object $\overline{S}_t\in\mathsf{Mod\,T}$ supported at $t$, and one may see that
\[
i^*\overline{S}_t=
\left\{
\begin{array}{cl}
S_t &\mathrm{if}~t\in\textnormal{indec}\,\mathsf{R},\\
0 & \mathrm{if}~t\in\textnormal{indec}\,\mathsf{S},
\end{array}
\right.
\]
where $S_t$ denotes the simple object in $\mathsf{Mod\,R}$ supported at $t$. Due to $i^*$ being exact, we can restrict it to the subcategories $\mathsf{fl\,T}$ and $\mathsf{fl\,R}$, made up of the finite length objects in $\mathsf{Mod\,T}$ and $\mathsf{Mod\,R}$, respectively. Then, there is an induced (surjective) group homomorphism
\[
\kappa :\, \textnormal{K}_0(\mathsf{fl\,T})\twoheadrightarrow\textnormal{K}_0(\mathsf{fl\,R}),
\]
with the obvious property that
\[
\kappa([\overline{S}_t])=
\left\{
\begin{array}{cl}
[S_t] & \mathrm{if}~t\in\textnormal{indec}\,\mathsf{R},\\
0 & \mathrm{if}~t\in\textnormal{indec}\,\mathsf{S}.
\end{array}
\right.
\]
\par
For the category $\mathsf{Mod\,T}$, there is a functor $\overline{G}$ similar to $G$ from Equation (\ref{def:thefunctorG}). It is defined by:
\begin{equation*}
\begin{split}
\overline{G}: \mathsf{C}& \rightarrow \mathsf{Mod\,T}\\
c & \mapsto \textnormal{Hom}_{\mathsf{C}}(-,\Sigma c)|_{\mathsf{T}}.
\end{split}
\end{equation*}
It is not hard to see that $\overline{G}$ has the property that $i^*\overline{G}=G$.
\par
We define $\theta$ to be the group homomorphism making the following diagram commute:
\begin{equation}\label{thetadiag}
\begin{gathered}
\xymatrix{
\textnormal{K}_{0}(\mathsf{fl\,T}) \ar@{->>}[d]^{\kappa} \ar[r]^-{\bar{\theta}} &\textnormal{K}_{0}^{\textnormal{split}}(\mathsf{T}) \ar@{->>}[d]^Q\\
\textnormal{K}_{0}(\mathsf{fl\,R}) \ar[r]_-\theta & \textnormal{K}_{0}^{\textnormal{split}}(\mathsf{T})/N,
}
\end{gathered}
\end{equation}
where 
\[
\bar{\theta}:\,\textnormal{K}_0(\mathsf{fl\,T})\rightarrow \textnormal{K}_0^{\textnormal{split}}(\mathsf{T})~,~\bar{\theta}([\overline{S}_t])=[a]-[a'],
\]
where $a,a'\in\,$add$\,\big(($indec$\,\mathsf{T})\setminus t\big)$ are from the exchange triangles for $t$ in (\ref{exchangetriangles}).
\par
Now, we may recall from \cite[def.~2.8]{friezes2} that the maps $\alpha\hspace{-1mm}:\,$obj$\,\mathsf{C}\rightarrow A$ and $\beta:\,$K$_0(\mathsf{fl\,R})\rightarrow A$ to a suitable ring $A$ can be defined by:
\begin{equation} \label{alphaandbeta}
\alpha(c)=\varepsilon Q(\textnormal{ind}_{\mathsf{T}}(c))~~\textnormal{and}~~\beta(e)=\varepsilon\theta(e),
\end{equation}
where $\varepsilon:\,$K$_0^{\textnormal{split}}(\mathsf{T})/N\rightarrow A$ is a suitably chosen exponential map, meaning that:
\begin{equation}\label{eqn:epsilonexponential}
\varepsilon(0)=1~,~\varepsilon(a+b)=\varepsilon(a)\varepsilon(b).
\end{equation}
Here, $a$ and $b$ denote two elements of K$_0^{\textnormal{split}}(\mathsf{T})/N$.

\begin{lemma}\label{alphabeta}
The maps $\alpha$ and $\beta$ from (\ref{alphaandbeta}) satisfy Condition F.
\end{lemma}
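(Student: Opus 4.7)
The plan is to unpack the definitions of $\alpha$ and $\beta$, translate Condition F into an identity in $\mathrm{K}_0^{\textnormal{split}}(\mathsf{T})/N$, and then invoke a Palu-style index addition formula for triangles in the $2$-Calabi--Yau category $\mathsf{C}$.

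First, I would use (\ref{alphaandbeta}) and the exponentiality of $\varepsilon$ from (\ref{eqn:epsilonexponential}), together with the exponentiality of $\alpha$ and $\beta$, to compute
\[
\alpha(y)=\varepsilon\bigl(Q(\textnormal{ind}_{\mathsf{T}}(y))\bigr)
\]
and
\[
\alpha(x\oplus z)\beta([\textnormal{Ker}\,G\varphi])=\varepsilon\bigl(Q(\textnormal{ind}_{\mathsf{T}}(x)+\textnormal{ind}_{\mathsf{T}}(z))+\theta([\textnormal{Ker}\,G\varphi])\bigr).
\]
Hence it suffices to establish the equality
\[
Q(\textnormal{ind}_{\mathsf{T}}(y))=Q(\textnormal{ind}_{\mathsf{T}}(x)+\textnormal{ind}_{\mathsf{T}}(z))+\theta([\textnormal{Ker}\,G\varphi])
\]
inside $\mathrm{K}_0^{\textnormal{split}}(\mathsf{T})/N$, as $\varepsilon$ will then carry this to Condition~F.

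Next, I would lift the kernel in the formula from $\mathsf{Mod}\,\mathsf{R}$ to $\mathsf{Mod}\,\mathsf{T}$. Since $i^*$ is exact and $i^*\overline{G}=G$, one has $\textnormal{Ker}\,G\varphi=i^*(\textnormal{Ker}\,\overline{G}\varphi)$, so that $[\textnormal{Ker}\,G\varphi]=\kappa([\textnormal{Ker}\,\overline{G}\varphi])$ in $\mathrm{K}_0(\mathsf{fl}\,\mathsf{R})$ (finite length of $\textnormal{Ker}\,\overline{G}\varphi$ in $\mathsf{Mod}\,\mathsf{T}$ follows from the finiteness hypothesis on $Gx$, $Gy$, $Gz$ combined with the description of $\overline{G}$-modules in \cite{friezes2}). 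Commutativity of diagram (\ref{thetadiag}) then gives
\[
\theta([\textnormal{Ker}\,G\varphi])=\theta\bigl(\kappa([\textnormal{Ker}\,\overline{G}\varphi])\bigr)=Q\bigl(\bar{\theta}([\textnormal{Ker}\,\overline{G}\varphi])\bigr),
\]
so the desired identity collapses to
\[
\textnormal{ind}_{\mathsf{T}}(y)-\textnormal{ind}_{\mathsf{T}}(x)-\textnormal{ind}_{\mathsf{T}}(z)\equiv \bar{\theta}([\textnormal{Ker}\,\overline{G}\varphi])\pmod{N},
\]
an equality that lives entirely in $\mathrm{K}_0^{\textnormal{split}}(\mathsf{T})$ modulo $N$.

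Finally, this last identity is precisely a Palu-type index addition formula for the triangle $x\to y\to z\to\Sigma x$ in the $2$-Calabi--Yau setting, generalised from cluster tilting objects to the cluster tilting subcategory $\mathsf{T}$; I would either quote the version that appears in \cite{friezes2} (it is the same ingredient needed to prove their frieze-like condition) or re-derive it by choosing $\mathsf{T}$-resolutions $t'\to t\to x$, $u'\to u\to y$, $v'\to v\to z$ of the three terms and applying the octahedral axiom to compare the direct sum of outer resolutions with the middle one, reading off the correction term as the image under $\bar{\theta}$ of the image of the connecting map $\overline{G}(\Sigma^{-1}z)\to\overline{G}x$ in $\mathsf{Mod}\,\mathsf{T}$, which is exactly $\textnormal{Ker}\,\overline{G}\varphi$. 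The main obstacle is this bookkeeping step: one has to identify the natural correction term arising from the octahedral argument with the class $[\textnormal{Ker}\,\overline{G}\varphi]$ in $\mathrm{K}_0(\mathsf{fl}\,\mathsf{T})$ and show that any ambiguity in the resolution data lies in the subgroup $N$ defined by (\ref{def:N}), so that the identity is well defined modulo $N$.
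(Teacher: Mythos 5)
Your proposal follows essentially the same route as the paper's proof: reduce Condition F (via exponentiality of $\varepsilon$) to an identity in $\mathrm{K}_0^{\textnormal{split}}(\mathsf{T})/N$, translate $[\textnormal{Ker}\,G\varphi]$ to $\kappa([\textnormal{Ker}\,\overline{G}\varphi])$ using $i^*\overline{G}=G$ and exactness of $i^*$, replace $\theta\circ\kappa$ with $Q\circ\bar\theta$ by commutativity of diagram~(\ref{thetadiag}), and close with an index formula. The ``Palu-type index addition formula'' you gesture at in the last paragraph is, in the paper, concretely the combination of \cite[prop.~2.2]{Palu} (which writes $\textnormal{ind}_\mathsf{T}(y)-\textnormal{ind}_\mathsf{T}(x)-\textnormal{ind}_\mathsf{T}(z)$ as $-\textnormal{ind}_\mathsf{T}(C)-\textnormal{ind}_\mathsf{T}(\Sigma^{-1}C)$ for a lift $C$ of $\textnormal{Coker}\,\overline{G}\Sigma^{-1}\omega$) and \cite[lem.~2.10]{friezes2} (which gives $\bar\theta([\overline{G}\Sigma^{-1}C])=-(\textnormal{ind}_\mathsf{T}(\Sigma^{-1}C)+\textnormal{ind}_\mathsf{T}(C))$), together with the long-exact-sequence observation that $\textnormal{Ker}\,\overline{G}\varphi=\textnormal{Coker}\,\overline{G}\Sigma^{-1}\omega=\overline{G}\Sigma^{-1}C$; the ``ambiguity modulo $N$'' you anticipate having to control does not actually arise, since both cited results hold already in $\mathrm{K}_0^{\textnormal{split}}(\mathsf{T})$ and $N$ enters only through the definition of $Q$ and $\theta$.
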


\begin{proof}
Consider the following triangle
\begin{equation}
\label{pf:xyztriangle}
\xymatrix{
x \ar[r]^{\varphi} & y \ar[r]^{\omega} & z \ar[r]^{\psi} & \Sigma x
}
\end{equation}
from (\ref{xyztriangle}). Then, by definition, we have
\begin{equation*}
\begin{split}
\alpha(y)&=\varepsilon Q(\textnormal{ind}_\mathsf{T}(y))\\
& = \varepsilon Q( \textnormal{ind}_\mathsf{T}(x)+\textnormal{ind}_\mathsf{T}(z)-\textnormal{ind}_\mathsf{T}(C)-\textnormal{ind}_\mathsf{T}(\Sigma^{-1}C))\\
& = \varepsilon(\textnormal{ind}_\mathsf{T}(x)+\textnormal{ind}_\mathsf{T}(z)-\textnormal{ind}_\mathsf{T}(C)-\textnormal{ind}_\mathsf{T}(\Sigma^{-1}C)+N)\\
& = \varepsilon(\textnormal{ind}_\mathsf{T}(x)+N)\varepsilon(\textnormal{ind}_\mathsf{T}(z)+N)\varepsilon(-\textnormal{ind}_\mathsf{T}(C)-\textnormal{ind}_\mathsf{T}(\Sigma^{-1}C)+N)\\
& = (*),
\end{split}
\end{equation*}
where $C$ in $\mathsf{C}$ is some lifting of Coker$\,\overline{G}(\Sigma^{-1}\omega)$ in the sense that $\overline{G}\Sigma^{-1}C=\,$Coker$\,\overline{G}(\Sigma^{-1}\omega)$. In the above manipulation, the second = is due to \cite[prop.~2.2]{Palu} and the penultimate = occurs since $\varepsilon$ is exponential, see Equation (\ref{eqn:epsilonexponential}).
\par
In addition,
\begin{equation*}
\begin{split}
\alpha(x\oplus z)\beta([\textnormal{Ker}\,G\varphi])& = \alpha(x)\alpha(z)\beta([\textnormal{Ker}\,G\varphi])\\
&=\varepsilon Q(\textnormal{ind}_\mathsf{T}(x))\varepsilon Q(\textnormal{ind}_\mathsf{T}(z))\beta([\textnormal{Ker}\,G\varphi])\\
& = \varepsilon(\textnormal{ind}_\mathsf{T}(x)+N)\varepsilon(\textnormal{ind}_\mathsf{T}(z)+N)\varepsilon \theta([\textnormal{Ker}\,G\varphi]),\\
& = (**)
\end{split}
\end{equation*}
where the first = is due to $\alpha$ being exponential and the penultimate = is just by the definition of $\beta$.
\par
Now, using the property that $i^*\overline{G}=G$,  it follows that
\[
[\textnormal{Ker}\,G\varphi]=[\textnormal{Ker}\,i^*\overline{G}\varphi]\overset{(1)}{=}[i^*\textnormal{Ker}\,\overline{G}\varphi]\overset{(2)}=\kappa[\textnormal{Ker}\,\overline{G}\varphi],
\]
where (1) follows from $i^*$ being an exact functor, and (2) from the definition of $\kappa$. We can now manipulate the expression ($**$) further:
\begin{equation*}
\begin{split}
(**)&=\varepsilon(\textnormal{ind}_\mathsf{T}(x)+N)\varepsilon(\textnormal{ind}_\mathsf{T}(z)+N)\varepsilon\theta(\kappa\big([\textnormal{Ker}\,\overline{G}\varphi])\big)\\
& = \varepsilon(\textnormal{ind}_\mathsf{T}(x)+N)\varepsilon(\textnormal{ind}_\mathsf{T}(z)+N)\varepsilon Q\big(\bar\theta([\textnormal{Ker}\,\overline{G}\varphi])\big)\\
& =\varepsilon(\textnormal{ind}_\mathsf{T}(x)+N)\varepsilon(\textnormal{ind}_\mathsf{T}(z)+N)\varepsilon\big(\bar\theta([\textnormal{Ker}\,\overline{G}\varphi])+N\big)\\
& = (***),
\end{split}
\end{equation*}
where the second equality is due to the commutativity of Diagram (\ref{thetadiag}).
\par
Comparing $(*)$ to $(***)$, we see that the required equality for Condition F is satisfied if
\begin{equation}
\label{resulteqn}
\bar\theta([\textnormal{Ker}\,\overline{G}\varphi])=-(\textnormal{ind}_\mathsf{T}(C)+\textnormal{ind}_\mathsf{T}(\Sigma^{-1}C)).
\end{equation}
Making use of the ``rolling" property on our triangle in (\ref{pf:xyztriangle}), we obtain the following sequence:
\[
\xymatrix{
\Sigma^{-1}y \ar[r]^{\Sigma^{-1}\omega} & \Sigma^{-1}z \ar[r]^-{\Sigma^{-1}\psi} & x \ar[r]^\varphi & y \ar[r]^\omega & z ,
}
\]
where any four consecutive terms form a triangle. Furthermore, since $\overline{G}$ is a homological functor, we may apply it to this sequence and produce the following long exact sequence in $\mathsf{fl\,T}$:
\begin{equation}
\label{longexactG}
\xymatrixcolsep{1.2pc}
\xymatrix{
& \overline{G}\Sigma^{-1}y \ar[rr]^{\overline{G}\Sigma^{-1}\omega} && \overline{G}\Sigma^{-1}z \ar[rr]^-{\overline{G}\Sigma^{-1}\psi} && \overline{G}x \ar[rr]^{\overline{G}\varphi} && \overline{G}y \ar[rr]^{\overline{G}\omega} && \overline{G}z.
}
\end{equation}
This shows Coker$\,\overline{G}\Sigma^{-1}\omega=\textnormal{Ker}\,\overline{G}\varphi$. Moreover, $C$ is chosen such that $\overline{G}\Sigma^{-1}C=\, $Coker$\,\overline{G}\Sigma^{-1}\omega$, and hence Ker$\,\overline{G}\varphi=\overline{G}\Sigma^{-1}C$. We can hence compute as follows:
\begin{equation*}
\begin{split}
\bar\theta([\textnormal{Ker}\,\overline{G}\varphi]) & = \bar\theta([\overline{G}\Sigma^{-1}C])\\
& = -(\textnormal{ind}_\mathsf{T}(\Sigma^{-1}C)+\textnormal{ind}_\mathsf{T}(\Sigma(\Sigma^{-1}C)))\\
&= -(\textnormal{ind}_\mathsf{T}(C)+\textnormal{ind}_\mathsf{T}(\Sigma^{-1}C)),
\end{split}
\end{equation*}
where the second = is due to \cite[lem.~2.10]{friezes2}. We can now see that Equation (\ref{resulteqn}) holds, and hence the lemma is proved.
\end{proof}

\begin{remark}
Through Lemma \ref{alphabeta} and Theorem \ref{alphabetadef}, we have managed to recover \cite[thm.~2.11]{friezes2}. Indeed, \cite[thm.~2.11]{friezes2} states that when
\[
\Delta=\Sigma c\rightarrow b\rightarrow c
\]
is an AR triangle in $\mathsf{C}$ such that $\overline{G}c$ and $\overline{G}(\Sigma c)$ have finite length in $\mathsf{Mod\,T}$, the maps $\alpha$ and $\beta$ from Equation (\ref{alphaandbeta}) satisfy the frieze-like condition given in \cite[def.~1.4]{friezes2}. By Lemma \ref{alphabeta}, we know that $\alpha$ and $\beta$ as defined in Equation (\ref{alphaandbeta}) satisfy Condition F. Theorem \ref{alphabetadef} proves that any $\alpha$ and $\beta$ satisfying Condition F also satisfy the frieze-like condition for $\Delta$ (recovering \cite[thm.~2.11]{friezes2}). Hence by \cite[thm.~1.6]{friezes2}, these $\alpha$ and $\beta$ turn $\rho$ into a generalised frieze.
\end{remark}

\section{The Multiplication Formula from \cite{friezes1}}\label{extension formula from friezes1}
In this section we demonstrate some of the technicalities behind the proof of the multiplication formula for $\pi$ from Equation (\ref{eqn:originalextensionintro}), proved in \cite[prop.~4.4]{friezes1}. This is done with a view of proving a similar formula for $\rho$ in Section \ref{extensionadaption}.  Following the setup of \cite[sec.~4]{friezes1}, for this section we do not require a cluster tilting subcategory $\mathsf{T}$, as the theory in \cite{friezes1} uses only the rigid subcategory $\mathsf{R}$. 
\par
Let $m\in\textnormal{indec}\,\mathsf{C}$ and $r\in\textnormal{indec}\,\mathsf{R}$ be indecomposable objects such that $\textnormal{Ext}^{1}_{\mathsf{C}}(r,m)$ and $\textnormal{Ext}^{1}_{\mathsf{C}}(m,r)$ both have dimension one over $\mathbb{C}$. As in \cite[rem.~4.2]{friezes1}, this allows us to construct the following nonsplit triangles in $\mathsf{C}$:
\begin{equation}
\label{nonsplit1}
\xymatrix{
m \ar[r]^\mu & a \ar[r]^\gamma & r \ar[r]^\delta & \Sigma m
}
\end{equation}
and
\begin{equation}
\label{nonsplit2}
\xymatrix{
r \ar[r]^\sigma & b \ar[r]^\eta & m \ar[r]^\zeta & \Sigma r,
}
\end{equation}
with $\delta$ and $\zeta$ nonzero. Note that ``rolling" the first triangle gives:
\[
\xymatrix{
\Sigma^{-1}r \ar[r]^-{-\Sigma^{-1}\delta} & m \ar[r]^{\mu} & a \ar[r]^\gamma & r,
}
\]
which is also a triangle in $\mathsf{C}$.
Applying the functor $G$ to both the ``rolled" triangle and the triangle in (\ref{nonsplit2}) gives the following exact sequences in $\mathsf{Mod\,R}$, obtained in \cite{friezes1}:
\[
\xymatrix{
G(\Sigma^{-1}r) \hspace{2mm} \ar[r]^{\hspace{3mm}-G(\Sigma^{-1}\delta)} & \hspace{2mm}Gm \ar[r]^{G\mu} & Ga \ar[r] & 0
}
\]
and
\[
\xymatrix{
0 \ar[r] &Gb \ar[r]^{G\eta} & Gm \ar[r]^{G\zeta} & G(\Sigma r).
}
\]
\begin{remark}
\begin{enumerate}
\item The zeros arise in each exact sequence due to $G(r)=\textnormal{Hom}(-,\Sigma r)|_{\mathsf{R}}$ being the zero functor. Indeed, since $\mathsf{R}$ is rigid, evaluating $G(r)$ at any $x$ in $\mathsf{R}$ will make the corresponding Hom-space zero.
\item The exact sequences are in $\mathsf{Mod\,R}$; that is, each term is a $\mathbb{C}$-linear contravariant functor $\mathsf{R} \rightarrow \mathsf{Vect}\,\mathbb{C}$.
\end{enumerate}
\end{remark}
 Letting Gr denote the Grassmannian of submodules, we have morphisms of algebraic varietes,
\[
\xymatrix{
\textnormal{Gr}(Ga) \ar@{^{(}->}[r]^\xi & \textnormal{Gr}(Gm) & \textnormal{Gr}(Gb) \ar@{_{(}->}[l]_\nu,
}
\]
\vspace{-3mm}
\[
\xymatrix{
\hspace{-1mm} P \ar@{|->}[r] & (G\mu )^{-1}(P), &
}
\]
\[
\xymatrix{
& \hspace{6mm} (G\eta) (N) & N.\ar@{|->}[l]
}
\]
It was proved in \cite[Lemma~4.3]{friezes1} that if $M$ in $\mathsf{Mod\,R}$ is some subfunctor of $Gm$, then either $M\subseteq \textnormal{Im}\,G\eta$ or $\textnormal{Ker}\,G\mu\subseteq M$, but not both. This means that for $M\subseteq Gm$ we can find either a subfunctor $N\subseteq Gb$ such that $(G\eta )(N)=M$ or we can find $P\subseteq Ga$ such that $(G\mu )^{-1}(P)=M$. Hence, the subfunctor $M$ is either of the form $(G\eta )(N)$ or $(G\mu )^{-1}(P)$.
\par
It is hence clear that Gr($Gm$) is isomorphic to the disjoint union of the images of $\xi$ and $\nu$. That is,
\[
\textnormal{Gr}(Gm)\cong \textnormal{Gr}(Gb) \bigsqcup \textnormal{Gr}(Ga).
\]
\begin{remark} \label{constructiblemaps}
\begin{enumerate}
\item We should note that for $M$ in $\mathsf{Mod\,R}$, the Grassmannian Gr$(M)$ is an algebraic variety. Therefore, it makes sense to calculate the Euler characteristic of Gr$(Gm)$, Gr$(Ga)$ and Gr$(Gb)$.
\item In addition, we note that $\xi$ and $\nu$ are both constructible maps, hence the images of $\xi$ and $\nu$ form constructible subsets in Gr$(Gm)$. See \cite[Section~2.1]{Palu} for the definitions of a constructible map and a constructible set.
\end{enumerate}
\end{remark}
The following statement then follows in \cite{friezes1}:
\begin{equation}\label{chiformula}
\chi (\textnormal{Gr}(Gm))=\chi (\textnormal{Gr}(Gb))+\chi (\textnormal{Gr}(Ga)),
\end{equation}
where $\chi$ again denotes the Euler characteristic defined by cohomology with compact support (see \cite[p.~93]{Fulton}). Using Remark \ref{constructiblemaps}, since the images of $\xi$ and $\nu$ are constructible sets inside Gr$(Gm)$, we know that $\chi$ is additive (see \cite[p.~92,~item~(3)]{Fulton}), which gives the above equality in (\ref{chiformula}). That is 
\[
\pi(m)=\pi(a)+\pi(b).
\]

\section{Adaptation of the Multiplication Formula to \cite{friezes2}} \label{extensionadaption}
This section builds on the material covered in the previous section and makes necessary adjustments and additions in order to obtain the multiplication formula for $\rho$, given in Theorem \ref{thm:extensionadaption}. Clearly, now that we are back working with $\rho$, we again require the setup of Section \ref{sec:introductory}; that is, we need a cluster tilting subcategory $\mathsf{T}$, with $\mathsf{R}\subseteq\mathsf{T}$.
\par
As with $\pi$, we look to understand how to evaluate $\rho$ for some $m\in\,$indec$\,\mathsf{C}$. In the definition of $\rho$, we take a sum over $e\in\,$K$_{0}(\mathsf{fl\,R})$. In order to do this, we will require knowledge of the Grothendieck group K$_{0}(\mathsf{fl\,R})$ and the K$_0$-classes of some of its key elements.
\par
Firstly, we know
\[
[\nu N]=[N].
\]
Indeed, by definition, $\nu N=(G\eta )(N)$, and since $G\eta$ is injective, $(G\eta )(N)$ and $N$ have the same composition series. Hence, the above equality is true. 
\par
To find $[\xi P]$, we first note that by definition, $\xi P = (G\mu )^{-1}P$, and therefore, $[\xi P]=[(G\mu)^{-1}P]$. A consequence of the Second Isomorphism Theorem is that a composition series of $(G\mu )^{-1}P$ can be obtained by concatenating composition series of $P$ and of Ker$\,G\mu$. That is, $[(G\mu )^{-1}(P)]=[P]+[$Ker$\,G\mu ]$.
So, the K$_0$-classes are:
\[
[\nu N]=[N]~\textnormal{and}~[\xi P]=[P]+[\textnormal{Ker}\,G\mu ].
\]
\par
Now that we have some useful information about the K$_0$-classes, we can take a more in depth look at $\rho$. Consider $r$ in $\mathsf{R}$, and let us calculate $\rho (r)$:
\begin{equation*}
\begin{split}
\rho (r) & =\alpha (r)\sum_{e}\chi \Big(\textnormal{Gr}_{e}(G(r))\Big)\beta (e) \\
& = \alpha (r)\sum_{e}\chi (\textnormal{Gr}_{e}(0))\beta (e) \\
& = \alpha (r)\beta (0) \\
& = \alpha (r).
\end{split}
\end{equation*}
In the above calculation, the third = is due to $\chi ($Gr$_{e}(0))$ being zero for all nonzero $e\in\,$K$_0(\mathsf{fl\,R})$ and one when $e=0$. The last = is since $\beta$ is exponential.
\par
Now consider $\rho (r)\rho(m)$ for $m\in\,$indec$\,\mathsf{C}$:
\begin{equation*}
\begin{split}
\rho (r)\rho(m) & = \alpha (r)\alpha (m)\sum_{e}\chi (\textnormal{Gr}_{e}(Gm))\beta (e)\\
& = \alpha (r)\alpha (m)\bigg( \sum_{e}\chi (\textnormal{Im}\,\xi \cap \textnormal{Gr}_{e}(Gm)) + \chi (\textnormal{Im}\,\nu\cap\textnormal{Gr}_{e}(Gm))\bigg)\beta (e),
\end{split}
\end{equation*}
where the second equality arises from Gr$(Gm)$ being the disjoint union of the images of $\xi$ and $\nu$.  We now make an important remark about the two intersections in the second equality above.

\begin{remark}
\begin{enumerate}
\item
The first intersection is given by the image of $\xi$ when applied to Gr$_e(Ga)$. Indeed,
\begin{equation*}
\begin{split}
\textnormal{Im}\,\xi\cap\textnormal{Gr}_{e}(Gm) & = \{\xi P\,|\,[\xi P]=e\}\\
& = \{\xi P\,|\,[P]=e-[\textnormal{Ker}\,G\mu ]\}\\
& = \xi (\textnormal{Gr}_{e-[\textnormal{Ker}G\mu]}(Ga)).
\end{split}
\end{equation*}
Here, we used the fact that $[\xi P]=[P]+[$Ker$G\mu]$.
\item 
The second intersection can be obtained in a similar way:
\begin{equation*}
\begin{split}
\textnormal{Im}\,\nu\cap\textnormal{Gr}_{e}(Gm) &=\{\nu N\,|\,[\nu N]=e\}\\
& = \{\nu N\,|\,[N]=e\}\\
& = \nu (\textnormal{Gr}_{e}(Gb))
\end{split}
\end{equation*}
\end{enumerate}
\end{remark}

Using this remark, we can continue to calculate $\rho (r)\rho (m)$, obtaining:
\begin{align}\label{rhorrhom}
\rho (r)\rho (m) & = \alpha (r)\alpha (m)\sum_{e}\bigg(\chi (\xi (\textnormal{Gr}_{e-[\textnormal{Ker}\,G\mu ]}(Ga)) + \chi (\nu (\textnormal{Gr}_{e}(Gb)))\bigg)\beta (e)
\nonumber \\
& = \alpha (r)\alpha (m)\sum_{e}\bigg(\chi (\textnormal{Gr}_{e-[\textnormal{Ker}\,G\mu ]}(Ga)) + \chi (\textnormal{Gr}_{e}(Gb))\bigg)\beta (e).
\end{align}
We can discard $\xi$ and $\nu$ in the final expression since they are both embeddings.

\begin{theorem} \label{thm:extensionadaption}
Let $m\in\textnormal{indec}\,\mathsf{C}$ and $r\in\textnormal{indec}\,\mathsf{R}$ such that $\textnormal{Ext}^{1}_{\mathsf{C}}(r,m)$ and $\textnormal{Ext}^{1}_{\mathsf{C}}(m,r)$ both have dimension one over $\mathbb{C}$. Then, there are nonsplit triangles 
\[
\xymatrix{
m \ar[r]^-\mu & a \ar[r]^-\gamma & r \ar[r]^-\delta & \Sigma m 
&and&
r \ar[r]^-\sigma & b \ar[r]^-\eta & m \ar[r]^-\zeta & \Sigma r,
}
\]
with $\delta$ and $\zeta$ nonzero. Let $Gm$ have finite length in $\mathsf{Mod\,R}$, then
\[
\rho(r)\rho(m)=\rho(a)+\rho(b).
\]
\end{theorem}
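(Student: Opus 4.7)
The plan is to start from equation (\ref{rhorrhom}), which already expresses $\rho(r)\rho(m)$ as a sum of two contributions — one involving the Grassmannians of $Ga$ and one involving the Grassmannians of $Gb$ — and to show that these two contributions are exactly $\rho(a)$ and $\rho(b)$. The entire argument then reduces to two applications of Condition F (Definition \ref{def:conditionf}), one to each of the nonsplit triangles defining $a$ and $b$, together with the exponential properties of $\alpha$ and $\beta$.

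For the $Gb$ term, I would apply Condition F to the triangle $r \stackrel{\sigma}{\rightarrow} b \stackrel{\eta}{\rightarrow} m \stackrel{\zeta}{\rightarrow} \Sigma r$. Since $r\in\mathsf{R}$ and $\mathsf{R}$ is rigid, the functor $Gr=\textnormal{Hom}_\mathsf{C}(-,\Sigma r)|_\mathsf{R}$ vanishes, so $\textnormal{Ker}\,G\sigma=0$. Condition F, combined with $\beta(0)=1$ and the exponential property of $\alpha$, then gives $\alpha(b)=\alpha(r\oplus m)\beta(0)=\alpha(r)\alpha(m)$, and hence
\[
\alpha(r)\alpha(m)\sum_{e}\chi(\textnormal{Gr}_e(Gb))\beta(e)=\rho(b).
\]

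For the $Ga$ term, I would reindex the sum by setting $e'=e-[\textnormal{Ker}\,G\mu]$ and use that $\beta$ is exponential to factor out $\beta([\textnormal{Ker}\,G\mu])$:
\[
\alpha(r)\alpha(m)\sum_{e}\chi(\textnormal{Gr}_{e-[\textnormal{Ker}\,G\mu]}(Ga))\beta(e)=\alpha(r)\alpha(m)\beta([\textnormal{Ker}\,G\mu])\sum_{e'}\chi(\textnormal{Gr}_{e'}(Ga))\beta(e').
\]
Applying Condition F to the triangle $m\stackrel{\mu}{\rightarrow}a\stackrel{\gamma}{\rightarrow}r\stackrel{\delta}{\rightarrow}\Sigma m$ yields $\alpha(a)=\alpha(m\oplus r)\beta([\textnormal{Ker}\,G\mu])=\alpha(m)\alpha(r)\beta([\textnormal{Ker}\,G\mu])$, identifying the right-hand side with $\rho(a)$. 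Adding the two contributions gives $\rho(r)\rho(m)=\rho(a)+\rho(b)$.

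The only point requiring verification before invoking Condition F is that $Ga$ and $Gb$ have finite length in $\mathsf{Mod}\,\mathsf{R}$, but this is immediate from the two exact sequences displayed in Section \ref{extension formula from friezes1}, which present $Ga$ as a quotient and $Gb$ as a subfunctor of $Gm$, combined with the hypothesis that $Gm$ has finite length. The main \emph{obstacle} is really just bookkeeping: ensuring that the kernel produced by Condition F on the first triangle matches precisely the shift $[\textnormal{Ker}\,G\mu]$ that appears in the reindexed Grassmannian stratification from equation (\ref{rhorrhom}). Once this matching is observed, the identity $\rho(r)\rho(m)=\rho(a)+\rho(b)$ drops out without any further computation.
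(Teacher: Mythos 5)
Your proposal is correct and follows essentially the same route as the paper's own proof: starting from Equation (\ref{rhorrhom}), reindexing the $Ga$ sum by $[\textnormal{Ker}\,G\mu]$, factoring via the exponentiality of $\beta$, applying Condition F to each of the two nonsplit triangles (with $\textnormal{Ker}\,G\sigma = 0$ since $Gr = 0$), and verifying that $Ga$, $Gb$ have finite length. The only cosmetic difference is that you treat the two summands symmetrically rather than factoring the whole expression first and inserting $\beta([\textnormal{Ker}\,G\sigma])=1$ afterwards.
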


\begin{proof}
We first note that since $Gm$ has finite length, then so do $Ga$ and $Gb$. This follows immediately from \cite[rem.~4.2]{friezes1}.
\par
Now, by making the substitution $f=e-[\textnormal{Ker}\,G\mu]$ in Equation (\ref{rhorrhom}), observe that
\begin{equation*}
\begin{split}
\rho(r)\rho(m) &=\alpha(r)\alpha(m)\sum_f\chi\big(\textnormal{Gr}_f(Ga)\big)\beta(f+[\textnormal{Ker}\,G\mu])\\
& \hspace{8mm}+\alpha(r)\alpha(m)\sum_e\chi\big(\textnormal{Gr}_e(Gb)\big)\beta(e)\\
& \overset{(a)}{=} \alpha(r)\alpha(m)\sum_f\chi\big(\textnormal{Gr}_f(Ga)\big)\beta(f)\beta([\textnormal{Ker}\,G\mu])\\
& \hspace{8mm} +\alpha(r)\alpha(m)\sum_e\chi\big(\textnormal{Gr}_e(Gb)\big)\beta(e)\\
\end{split}
\end{equation*}
\begin{equation}
\label{rhorrhom1}
\begin{split}
&\hspace{15mm} \overset{(b)}{=} \alpha(r)\alpha(m)\beta([\textnormal{Ker}\,G\mu])\sum_f\chi\big(\textnormal{Gr}_f(Ga)\big)\beta(f)\\
& \hspace{23mm} +\alpha(r)\alpha(m)\sum_e\chi\big(\textnormal{Gr}_e(Gb)\big)\beta(e).
\end{split}
\end{equation}
Here, (a) is due to $\beta$ being exponential (see \cite[setup~1.1]{friezes2}) and (b) is due to $\beta([\textnormal{Ker}\,G\mu])$ being a constant.
\par
Now, consider Ker$\,G\sigma$. Since $G(r)=0$, then Ker$\,G\sigma=0$, and clearly $[\textnormal{Ker}\,G\sigma]=0$. The map $\beta$ is exponential, and therefore $\beta([\textnormal{Ker}\,G\sigma])=\beta(0)=1$. We can insert this in to Equation (\ref{rhorrhom1}) and see that:
\begin{equation}
\label{rhorrhom2}
\begin{split}
\rho(r)\rho(m) & = \alpha(r)\alpha(m)\beta([\textnormal{Ker}\,G\mu])\sum_f\chi\big(\textnormal{Gr}_f(Ga)\big)\beta(f)\\
&  \hspace{8mm}+\alpha(r)\alpha(m)\beta([\textnormal{Ker}\,G\sigma])\sum_e\chi\big(\textnormal{Gr}_e(Gb)\big)\beta(e).
\end{split}
\end{equation} 
Applying Condition F to our two triangles in the theorem, whilst remembering that $\alpha$ is exponential, gives
\begin{equation*}
\begin{split}
\alpha(a) & = \alpha(r)\alpha(m)\beta([\textnormal{Ker}\,G\mu]) \\
\alpha(b) & = \alpha(r)\alpha(m)\beta([\textnormal{Ker}\,G\sigma]).
\end{split}
\end{equation*}
Returning these equalities into Equation (\ref{rhorrhom2}), the expression for $\rho(r)\rho(m)$ becomes:
\begin{equation*}
\begin{split}
\rho(r)\rho(m) & =\alpha(a)\sum_f\chi\big(\textnormal{Gr}_f(Ga)\big)\beta(f)+\alpha(b)\sum_e\big(\textnormal{Gr}_e(Gb)\big)\beta(e)\\
& = \rho(a)+\rho(b).
\end{split}
\end{equation*}
\end{proof}

\section{Example for $\mathsf{C}(A_5)$}\label{sec:ca5example}

In this section, we will demonstrate the multiplication formula for $\rho$ in Theorem \ref{thm:extensionadaption} by recomputing a vertex in the AR quiver in Figure \ref{fig:ca5quiver}. We first give some brief background on the polygon model for $\mathsf{C}=\mathsf{C}(A_n)$, the cluster category of Dynkin type $A_n$.  Here, $n\geq 2$ is an integer. By \cite{clusterancase}, the indecomposables of $\mathsf{C}$ can be identified with the diagonals of a regular $(n+3)$-gon $P$ with the set of vertices $\{0,...,n+2\}$. By \cite[thm.~5.4]{friezes1}, the indecomposables of the rigid subcategory $\mathsf{R}\subseteq\mathsf{T}$ give a polygon dissection of $P$, and by \cite{clustercombinatorics} the cluster tilting subcategory $\mathsf{T}$ gives a full triangluation of the $(n+3)$-gon. Indeed, recall that there is a full subcategory $\mathsf{S}$, which is closed under direct sums and summands, such that
\[
\mathrm{indec}\,\mathsf{T}=\mathrm{indec}\,\mathsf{R}\cup\mathrm{indec}\,\mathsf{S}.
\]
Hence, the indecomposables in $\mathsf{S}$ correspond to a triangulation of each of the cells of $P$ given by the polygon dissection from indec$\,\mathsf{R}$. We should note in addition that each edge of the $(n+3)$-gon is identified with the zero object inside $\mathsf{C}$.
\par
This model also comes with the convenient property that for two indecomposables $a$ and $b$ in $\mathsf{C}$
\[
\mathrm{dim}_\mathbb{C}\mathrm{Ext}^1_\mathsf{C}(a,b)=
\left\{
\begin{array}{ll}
1, & \mathrm{if}~a~\mathrm{and}~b~\mathrm{cross}\\
0, & \mathrm{otherwise}.
\end{array}
\right.
\]
\par It is known by Theorem \ref{thm:extensionadaption} that for $m\hspace{-1mm}\in\,$indec$\,\mathsf{C}$ and $r\hspace{-1mm}\in\,$indec$\,\mathsf{R}$ such that dim$_\mathbb{C}$Ext$^1_\mathsf{C}(m,r)=\,$dim$_\mathbb{C}$Ext$^1_\mathsf{C}(r,m)=1$, then
\[
\rho(r)\rho(m)=\rho(a)+\rho(b),
\]
where $a$ and $b$ are the middle terms of the nonsplit extensions in (\ref{nonsplit1}) and (\ref{nonsplit2}). In the case of $\mathsf{C}=\mathsf{C}(A_n)$, $a$ and $b$ can be obtained as seen in the polygon in Figure \ref{fig:aandb}, where we have $a=a_1\oplus a_2$ and $b=b_1\oplus b_2$. See \cite[sec.~5]{friezes2} for details.
\par

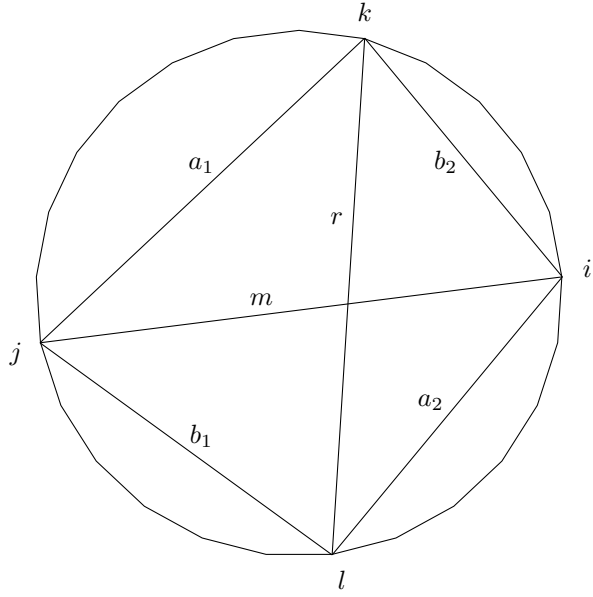
\begin{figure}
\centering
\begin{tikzpicture}
\node[regular polygon, regular polygon sides=25, draw, minimum size=7cm] (a){};

\path (a.corner 25) ++(90+25*360/25:10pt) node {$k$};
\path (a.corner 20) ++(90+20*360/25:10pt) node {$i$};
\path (a.corner 14) ++(90+14*360/25:10pt) node {$l$};
\path (a.corner 8) ++(90+8*360/25:10pt) node {$j$};
\draw (a.corner 25) --(a.corner 20);
\draw (a.corner 20) --(a.corner 14);
\draw (a.corner 14) --(a.corner 8);
\draw (a.corner 8) --(a.corner 25);
\draw (a.corner 8) --(a.corner 20);
\draw (a.corner 25) --(a.corner 14);

\node at (-0.5,-0.08) {$m$};
\node at (0.50,1) {$r$};
\node at (-1.3,1.7) {$a_1$};
\node at (1.75,-1.45) {$a_2$};
\node at (-1.3,-1.9) {$b_1$};
\node at (1.95,1.75) {$b_2$};
\end{tikzpicture}
\caption{There are nonsplit extensions $m\rightarrow a_1\oplus a_2\rightarrow r$ and $r\rightarrow b_1\oplus b_2 \rightarrow m$.}
\label{fig:aandb}
\end{figure}

\par
Now, to compute our example, we refer to the setup of \cite[sec.~3]{friezes2}; that is, we set $\mathsf{C}=\mathsf{C}(A_5)$, the cluster category of Dynkin type $A_5$. Thus, the indecomposables of $\mathsf{C}$ can be identified with the diagonals on a regular 8-gon. As in \cite{friezes2} we will denote by $\{a,b\}$ the indecomposable corresponding to the diagonal connecting the vertices $a$ and $b$.  We use the same polygon triangulation as in \cite[sec.~3]{friezes2}; that is, indec$\,\mathsf{R}$ corresponds to the red diagonals in Figure \ref{fig:8gon} and indec$\,\mathsf{S}$ corresponds to the blue diagonals. Hence, $\mathsf{T}$ contains the following indecomposable objects 
\[
\{1,7\}~,~\{2,4\}~,~\{2,5\}~,~\{2,7\}~,~\{5,7\},
\]
whilst the indecomposables in $\mathsf{S}$ are:
\[
\{1,7\}~,~\{2,4\}~,~\{5,7\}.
\]
These indecomposables in $\mathsf{S}$ fit in the following exchange triangles:
\[
\xymatrix@R=0.6pc{
\{1,7\}\ar[r]&\{2,7\}\ar[r]&\{2,8\}&\{2,8\}\ar[r]& 0\ar[r]&\{1,7\}\\
\{2,4\}\ar[r]&\{2,5\}\ar[r]&\{3,5\}&\{3,5\}\ar[r]& 0\ar[r]&\{2,4\}\\
\{5,7\}\ar[r]&\{2,5\}\ar[r]&\{2,6\}&\{2,6\}\ar[r]&\{2,7\}\ar[r]&\{5,7\}.\\
}
\]
Then, applying the definition of $N$ from (\ref{def:N}), it is easily seen that
\[
N=\big< [2,5],[2,7]\big>,
\]
Here, we denote by $[a,b]$ the K$_0^{\textnormal{split}}$-class of the indecomposable $\{a,b\}$. We also have
\[
\textnormal{K}_0^{\textnormal{split}}(\mathsf{T})/N=\Big<[1,7]+N,~ [2,4]+N,~[5,7]+N\Big>,
\]
and let the exponential map $\varepsilon:\,$K$_0^{\textnormal{split}}(\mathsf{T})/N \rightarrow \mathbb{Z}[u^{\pm1},v^{\pm1},z^{\pm1}]$ be given by:
\begin{equation}\label{def:epsilon}
\varepsilon([1,7]+N)=u~,~\varepsilon([2,4]+N)=v~,~\varepsilon([5,7]+N)=z.
\end{equation}

\begin{figure}
\centering
\begin{tikzpicture}[scale=2]
\draw (0.92,0.38) --(0.92,-0.38) --(0.38,-0.92) --(-0.38,-0.92) --(-0.92,-0.38) --(-0.92,0.38) --(-0.38,0.92) --(0.38,0.92) --(0.92,0.38);
\draw [blue, thick] (-0.38, 0.92) --(-0.92,-0.38);
\draw [blue, thick] (-0.38,-0.92) --(0.92,-0.38);
\draw [blue, thick] (0.38,0.92) --(0.92,-0.38);
\draw [red, thick] (-0.38,0.92) --(-0.38,-0.92);
\draw [red, thick] (-0.38,0.92) --(0.92,-0.38);

\node [above right] at (0.38,0.92) {1};
\node [above left] at (-0.38,0.92) {2};
\node [left] at (-0.95,0.38) {3};
\node [left] at (-0.95,-0.38) {4};
\node [below left] at (-0.38,-0.92) {5};
\node [below right] at (0.38, -0.92) {6};
\node [right] at (0.95,-0.38) {7};
\node [right] at (0.95,0.38) {8};
\end{tikzpicture}
\caption{Red diagonals correspond to indecomposables in indec$\,\mathsf{R}$ and blue diagonals correspond to indecomposables in indec$\,\mathsf{S}$.}
\label{fig:8gon}
\end{figure}
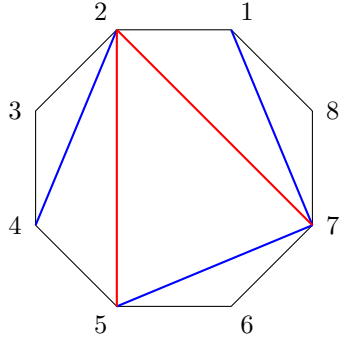

We will now demonstrate how to calculate $\rho(\{4,6\})$ using an alternative method to that in \cite[ex.~3.5]{friezes2}. We will compute it by applying the multiplication formula for $\rho$ in Theorem \ref{thm:extensionadaption}. Since dim$_\mathbb{C}$Ext$^1(\{4,6\},\{2,5\})=\,$dim$_\mathbb{C}$Ext$^1(\{2,5\},\{4,6\})=1$, we may set $r=\{2,5\}$, and using Figure \ref{fig:aandb}, we know that $\{4,6\}$ sits in the following nonsplit extensions:
\[
\{4,6\}\rightarrow\{2,4\}\rightarrow\{2,5\}~,~\{2,5\}\rightarrow\{2,6\}\rightarrow\{4,6\}.
\]
Applying Theorem \ref{thm:extensionadaption}, we get the following equality:
\begin{equation}\label{eqn:rhoex}
\rho(\{2,5\})\rho(\{4,6\})=\rho(\{2,4\})+\rho(\{2,6\}).
\end{equation}
Due to the fact that the diagonals corresponding to $\{2,5\}, \{2,4\}$ and $\{2,6\}$ do not cross any diagonals in indec$\,\mathsf{R}$, it is immediate that
\[
G(\{2,5\})=G(\{2,4\})=G(\{2,6\})=0.
\]
Hence, by the definition of $\rho$, Equation (\ref{eqn:rhoex}) becomes:
\begin{equation}\label{eqn:rhoex1}
\alpha(\{2,5\})\rho(\{4,6\})=\alpha(\{2,4\})+\alpha(\{2,6\}).
\end{equation}
In order to calculate $\alpha$ of each of the indecomposables $\{2,5\}, \{2,4\}$ and $\{2,6\}$, we first calculate their indices with respect to $\mathsf{T}$. 
\par
$\{2,5\}$ sits in the following triangle:
\[
0\rightarrow \{2,5\}\rightarrow\{2,5\},
\]
and since $\{2,5\}\in\,$indec$\,\mathsf{T}$, we see that:
\begin{equation}\label{ind25}
\textnormal{ind}_\mathsf{T}(\{2,5\})=[2,5].
\end{equation}
By the same logic,
\begin{equation}\label{ind24}
\textnormal{ind}_\mathsf{T}(\{2,4\})=[2,4].
\end{equation}
We note that one of the exchange triangles for $\{2,6\}$ is:
\[
\{5,7\}\rightarrow\{2,5\}\rightarrow\{2,6\},
\]
and hence
\begin{equation}\label{ind26}
\textnormal{ind}_\mathsf{T}(\{2,6\})=[2,5]-[5,7].
\end{equation}
\par
Since $[2,5]\in N$, using the definition of $\alpha$ from \cite[def.~2.8]{friezes2}, we see that:
\begin{equation*}
\begin{split}
\alpha(\{2,5\}) & =\varepsilon Q(\textnormal{ind}_\mathsf{T}(\{2,5\})\\
& = \varepsilon Q([2,5])\\
& = \varepsilon([2,5]+N)\\
& = \varepsilon(0)\\
&= 1,
\end{split}
\end{equation*}
 and hence, the right hand side of Equation (\ref{eqn:rhoex1}) becomes $\alpha(\{2,5\})\rho(\{4,6\})=\rho(\{4,6\})$. Substituting back into Equation (\ref{eqn:rhoex1}), we obtain:
\begin{equation*}
\begin{split}
\rho(\{4,6\}) &=\alpha(\{2,4\})+\alpha(\{2,6\})\\
& = \varepsilon Q(\textnormal{ind}_\mathsf{T}(\{2,4\}))+\varepsilon Q(\textnormal{ind}_\mathsf{T}(\{2,6\}))\\
& \overset{(1)}{=} \varepsilon Q([2,4])+\varepsilon Q([2,5]-[5,7])\\
& = \varepsilon([2,4]+N)+\varepsilon(-[5,7]+N)\\
& \overset{(2)}{=} v + z^{-1}\\
& = \frac{1+vz}{z},
\end{split}
\end{equation*}
where (1) is by substituting the values from Equations (\ref{ind24}) and (\ref{ind26}), and (2) is due to the definition of $\varepsilon$ from Equation (\ref{def:epsilon}). We notice here that this is indeed the same result for $\rho(\{4,6\})$ obtained in \cite[ex.~3.5]{friezes2}.

\begin{remark}
In general, the formula from Theorem \ref{thm:extensionadaption} can be applied iteratively to calculate $\rho(m)$. Indeed,
\[
\rho(r)\rho(m)=\rho(a)+\rho(b)
\] is an iterative formula on $m$, and hence, calculating $\rho$ of each indecomposable in $\mathsf{C}$ can be reduced to calculating $\rho$ of the indecomposables in $\mathsf{C}$ whose corresponding diagonals in the $(n+3)$-gon do not cross any of the diagonals in $\mathsf{R}$. Namely, it is clear from Figure \ref{fig:aandb} that each of $a_1,a_2,b_1$ and $b_2$ sit inside ``smaller" polygons than $m$. Here, the smaller polygons are those obtained from $r$ dissecting the (n+3)-gon. Since $\mathsf{R}$ consists of only non-crossing diagonals, the remaining diagonals in $\mathsf{R}$ sit inside these smaller polygons. Reapplying Theorem \ref{thm:extensionadaption} to each of $a_1,a_2,b_1$ and $b_2$ will again create a series of even smaller polygons, containing a new $a$ or $b$. After repeated iterations, this process will eventually terminate at the stage when the new $a$ or $b$ does not cross any of the diagonals in $\mathsf{R}$.
\par
Now, in the case when a diagonal, say $m'$, does not cross a diagonal in $\mathsf{R}$, calculating $\rho(m')$ is acheived by calculating $\alpha(m')$. This is clear since $G(m')=0$. Computing $\alpha(m')$ is done by calculating the index of $m'$, and then applying the maps $Q$ and $\varepsilon$. Hence, finding $\rho(m)$ for each $m\in\,$indec$\,\mathsf{C}$ can be reduced by Theorem \ref{thm:extensionadaption} to computing the index of each of the indecomposables in $\mathsf{C}$ whose corresponding diagonals do not cross any diagonals in $\mathsf{R}$.
\end{remark}

\noindent
{\textbf{Acknowledgement}.} The author acknowledges economic support in the form of a studentship from the School of Mathematics and Statistics at Newcastle University.


\begin{thebibliography}{20}
\bibitem{assemdupont}
I. Assem and G. Dupont, \emph{Friezes and a construction of the Euclidean cluster variables}, J. Pure Appl. Algebra \textbf{215} (2011), 2322-2340. MR 2793939. DOI 10.1016/j.jpaa.2012.12.013.
\bibitem{repartin2}
M. Auslander. \emph{Representation theory of Artin algebras II}, Comm. Algebra \textbf{1} (1974), 269-310.
\bibitem{clusterstructures}
A. B. Buan, O. Iyama, I. Reiten, and J. Scott, \emph{Cluster Structure for 2-Calabi-Yau categories and unipotent groups}, Compositio Math. \textbf{145} (2009), 1035-1079.
\bibitem{clustercombinatorics}
A. B. Buan, R. Marsh, M. Reineke, I. Reiten, and G. Todorov, \emph{Tilting theory and cluster combinatorics,} Adv. Math. \textbf{204} (2006), 572-618. MR 2249625.
\bibitem{brustlezhang}
T. Br{\"u}stle, J. Zhang, \emph{On the cluster category of a marked surface without punctures}, Algebra Number Theory \textbf{5} (4) (2011) 529-566 MR2870100
\bibitem{hallalgebras}
P. Caldero and F. Chapoton, \emph{Cluster algebras as Hall algebras of quiver representations}, Comment. Math. Helv. \textbf{81} (2006), 595-616. MR 2250855. DOI 10.4171/CMH/65.
\bibitem{clusterancase}
P. Caldero, F. Chapoton, and R. Schiffler, \emph{Quivers with relations arising from clusters ($A_n$ case),} Trans. Amer. Math. Soc. \textbf{358} (2006), 1347-1364.
\bibitem{calderokeller}
P. Caldero, B. Keller, \emph{From triangulated categories to cluster algebras}, Invent. Math. \textbf{172} (2008), 169-211.
\bibitem{conwaycoxeter1}
J. H. Conway, H.S. \hspace{-1mm}M. Coxeter, \emph{Triangulated polygons and frieze patterns}, Math. Gaz. 57 (1973), no. 400, 87-94.
\bibitem{conwaycoxeter2}
J. H. Conway, H.S. \hspace{-1mm}M. Coxeter, \emph{Triangulated polygons and frieze patterns}, Math. Gaz. 57 (1973), no. 401, 175-183.
\bibitem{dehykeller}
R. Dehy and B. Keller, \emph{On the combinatorics of rigid objects in 2-Calabi-Yau categories}, Int. Math. Res. Notices (2008), no.11.
\bibitem{dominguezgeiss}
S. Dom{\'i}nguez and C. Geiss, \emph{A Caldero-Chapoton formula for generalized cluster categories}, J. Algebra \textbf{399} (2014), 887-893. MR 3144617. DOI 10.1016/j.jalgebra.2013.10.018.
\bibitem{Fulton}
W. Fulton, \emph{Introduction to Toric Varieties}, Ann. of Math. Stud. \textbf{131}, Princeton University Press, Princeton, 1993. MR 1234037.
\bibitem{friezes1}
T. Holm and P. J{\o}rgensen. \emph{Generalized Friezes and a Modified Caldero-Chapoton Map Depending on a Rigid Object.} Nagoya Math J. \textbf{218} (2015), 101-124.
\bibitem{friezes2}
T. Holm and P. J{\o}rgensen. \emph{Generalized Friezes and a Modified Caldero-Chapoton Map Depending on a Rigid Object, II.} Bull. Sci. Math. \textbf{140} (2016), 112-131.
\bibitem{mrz}
R. Marsh, M. Reineke, and A. Zelevinsky, \emph{Generalized associahedra via quiver representations}, Trans. Amer. Math. Soc. \textbf{355} (2003), no. 10, 4171-4186.
\bibitem{Palu}
Y. Palu, \emph{Cluster characters, II: A multiplication formula}, Proc. Lond. Math. Soc. (3) \textbf{104} (2012).
\end{thebibliography}
\end{document}